\documentclass[11pt,a4paper,reqno]{amsart}
   %
\usepackage{amsfonts}\usepackage{amsmath,amssymb,amsthm,amsxtra}
\usepackage{float}
\usepackage{amssymb}
\usepackage{mathabx}
\usepackage{bbm}
\usepackage[colorlinks, linkcolor=blue,anchorcolor=Periwinkle,
citecolor=Red,urlcolor=Emerald]{hyperref}
\usepackage[usenames,dvipsnames]{xcolor}
\usepackage{enumitem}
\setlength{\unitlength}{2.7pt}
\usepackage{geometry,array} \geometry{left=3.2cm,right=3.2cm}
\usepackage{graphicx}
\usepackage{subfigure}
\usepackage{bookmark}
\usepackage{tikz}\usetikzlibrary{matrix}
\usepackage{url}
\usepackage{dsfont}
\usepackage[colorinlistoftodos]{todonotes}
\usepackage{tablists} \restorelistitem
\usepackage{color}

\usetikzlibrary{decorations.markings}
\tikzset{->-/.style={decoration={  markings,  mark=at position #1 with
    {\arrow{>}}},postaction={decorate}}}
\tikzset{-<-/.style={decoration={  markings,  mark=at position #1 with
    {\arrow{<}}},postaction={decorate}}}
\usepackage{extarrows}
\usepackage[all]{xy}
\usepackage{setspace}\setstretch{1.3}
\usepackage{thmtools}
\usepackage{thm-restate}
\usepackage{hyperref}
\usepackage{cleveref}
\usepackage{pifont}





\newcommand{\mfd}{\mathbf{d}}
\newcommand{\mfe}{\mathbf{e}}
\newcommand{\mff}{\mathbf{f}}

\newcommand{\mfp}{\mathbf{p}}

\newcommand{\mfx}{\mathbf{x}}
\newcommand{\mfy}{\mathbf{y}}

\newcommand{\mcA}{\mathcal{A}}

\newcommand{\mcF}{\mathcal{F}}

\newcommand{\mcP}{\mathcal{P}}


\newcommand{\mbN}{\mathbb{N}}

\newcommand{\mbP}{\mathbb{P}}
\newcommand{\mbQ}{\mathbb{Q}}
\newcommand{\mbR}{\mathbb{R}}

\newcommand{\mbT}{\mathbb{T}}

\newcommand{\mbZ}{\mathbb{Z}}

\theoremstyle{plain}
\newtheorem{theorem}{Theorem}[section]

\newtheorem{lemma}[theorem]{Lemma}

\newtheorem{proposition}[theorem]{Proposition}
\newtheorem{conjecture}[theorem]{Conjecture}
\theoremstyle{definition}
\newtheorem{definition}[theorem]{Definition}

\newtheorem{example}[theorem]{Example}

\newtheorem{remark}[theorem]{Remark}

\numberwithin{equation}{section}
\newtheorem{definition-proposition}[theorem]{Definition-Proposition}

\begin{document}

\title{Log-concavity of cluster algebras of type $A_n$}

\date{\today}
\author{Zhichao Chen}
\address{School of Mathematical Sciences\\ University of Science and Technology of China \\ Hefei, Anhui 230026, P. R. China}
\email{czc98@mail.ustc.edu.cn}

\author{Guanhua Huang}
\address{School of Mathematical Sciences\\ University of Science and Technology of China \\ Hefei, Anhui 230026, P. R. China}
\email{huanggh@mail.ustc.edu.cn}

\author{Zhe Sun}
\address{School of Mathematical Sciences\\ University of Science and Technology of China \\ Hefei, Anhui 230026, P. R. China}
\email{sunz@ustc.edu.cn}

\maketitle

\begin{abstract}
Okounkov \cite{Oko03} conjectured the log-concavity about the structure constants for many interesting basis from representation theory. For the cluster algebra, Gross, Hacking, Keel, Kontsevich \cite{GHKK18} introduced the atomic theta basis. We prove that the coefficients of the exponents of any cluster variable of type $A_n$ are log-concave. 
We show that the structure constants for theta basis of type $A_2$ are log-concave. As for larger generality, we conjecture that the log-concavity of the structure constants for theta basis of the cluster algebra.
\end{abstract}
\tableofcontents
\section{Introduction}
In \cite{Oko03}, Okounkov conjectured the log-concavity in large from statistical physics point of view, particularly, the structure constants for many interesting basis from representation theory. The recently developed Lorentzian polynomials \cite{BH20} help to solve many interesting cases, e.g. \cite{HMMS22}. We are interested in the log-concavity for the theta basis \cite{GHKK18} of cluster algebras, which are not Lorentzian in general. 

Cluster algebras are important commutative algebras with different generators and relations between them. In \cite{FZ02, FZ03}, they were first introduced to investigate the total positivity of Lie groups and canonical bases of quantum groups. Nowadays, cluster algebras are closely related to different subjects in mathematics.

Later on, in \cite{FG06, FG09}, Fock and Goncharov generalized the cluster structure into the cluster ensemble structure for a pair of dual spaces $(\mathcal{X}^*, \mathcal{A})$, and they conjectured that the tropical integer points of one space cluster modular group equivariantly parameterizes the canonical linear basis of the ring of regular functions on the dual space, and the highest term exponent of the regular function in cluster variables reflects the corresponding tropical point. Unfortunately, this conjecture is usually false due to lack of global functions by \cite{GHK15}. By the seminal work of Gross, Hacking, Keel and Kontsevich \cite{GHKK18}, using scattering diagrams, broken lines and theta functions, they proved the duality conjecture under certain conditions. By \cite{Man17}, the theta functions are the atomic global Laurent polynomials of cluster variables.

When the cluster algebra is of finite type, the theta functions are exactly cluster monomials \cite[Theorem 7.20]{GHKK18}. Particularly for the cluster algebras of type $A_n$, the theta basis is exactly the cluster monomial basis. Let us index the theta function $\theta_i$ by the tropical point $i$. Then
\begin{align}\theta_{i_1}\cdots \theta_{i_n}=\sum c_{i_1,\cdots,i_n}^j \theta_{j},\end{align}
where $c_{i_1,\cdots,i_n}^j\in \mathbb{Z}_{\geq 0}$.
In \cite{Shen14}, Shen proved that the support of the structure constants $c_{i_1,\cdots,i_n}^j$ is exactly a cluster convex hull of $i_1,\cdots,i_n$. Fix the cluster variables $x_1,\cdots,x_k$ of a seed. By \Cref{theorem:LE}, any theta function $\theta_c$ can be written as a polynomial $P_c$ of $x_1,\cdots,x_k$ diving the theta function $\theta_d=x_1^{j_1}\cdots x_k^{j_k}$ where $j_i\in \mathbb{Z}_{\geq 0}$. Thus, the support of the coefficients of $\theta_c$ or $P_c$ is a cluster convex hull of $c,d$. 
We say that the structure constants $c_{i_1,\cdots,i_n}^j$ are {\em log-concave} in $j$ if for any cluster chart $\mathbb{R}^k$ of $j$, there is a log-concave function $f(j)$ passing through all non-zero $c_{i_1,\cdots,i_n}^j$. 
To further understand these coefficients, we conjecture that
\begin{conjecture}\label{conj1}
For certain cluster algebra including all the finite type cluster algebra, the structure constants $c_{i_1,\cdots,i_n}^j$ of the theta basis are log-concave in $j$. Particularly, the coefficients of $\theta_c$ in the cluster variable of any given seed are log-concave.
\end{conjecture}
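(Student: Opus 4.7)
The plan is to prove the second half of the conjecture first---log-concavity of the coefficients of $\theta_c$ expressed in a fixed cluster---and then deduce the structure-constant half from it. By \Cref{theorem:LE}, any theta function $\theta_c$ equals $P_c/\theta_d$ for a polynomial $P_c$ in the chosen cluster variables $x_1,\ldots,x_k$ and a cluster monomial $\theta_d=x_1^{j_1}\cdots x_k^{j_k}$. Multiplying $\theta_{i_1}\cdots\theta_{i_n}$ by a common denominator of this form yields a Laurent polynomial whose support, by Shen's theorem, is the cluster convex hull of $i_1,\ldots,i_n$ shifted by the denominator. Log-concavity of the structure constants in $j$ is therefore equivalent to log-concavity of the coefficients of a single Laurent polynomial, so the entire conjecture reduces to the Laurent-coefficient statement for products of cluster monomials.

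For type $A_n$ I would then argue via snake graphs. Cluster monomials are parametrized by collections of non-crossing diagonals in a polygon, and the formula of Musiker, Schiffler and Williams realizes each cluster variable as the weighted perfect-matching generating function of an associated snake graph. A cluster monomial is the generating function of perfect matchings of a disjoint union of such snake graphs, so the coefficient of a prescribed Laurent monomial counts matchings of given weight. The inequality $(c^{\mathbf{m}})^2\geq c^{\mathbf{m}-\mathbf{e}_s+\mathbf{e}_t}\, c^{\mathbf{m}+\mathbf{e}_s-\mathbf{e}_t}$ for adjacent initial variables $x_s,x_t$ then becomes the combinatorial statement that pairs of matchings of unbalanced weight can be injected into pairs of matchings of balanced weight. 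I would construct such an injection by exchanging portions of two matchings along their symmetric-difference cycles, choosing the cycle to flip so as to transfer exactly one unit of $x_s$-weight into $x_t$-weight.

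For the remaining finite Dynkin types one can try to fold from the simply laced case by restricting to invariants under a diagram automorphism; alternatively, one can replace matchings by submodules in the additive categorification by preprojective-algebra modules and port the same exchange argument. The crux of the whole programme is the combinatorial injection in the previous paragraph: the natural local swap along an alternating cycle does not automatically transfer the correct exponents, so a global invariant of the pair of matchings is needed to choose the right cycle to flip. More serious is the extension beyond finite type, where no snake graph description is available and the theta basis fails to be Lorentzian in the sense of Br\"and\'en--Huh; in that regime one would have to argue directly from the broken line and scattering diagram picture of \cite{GHKK18} that the contributions to $\theta_{i_1}\cdots\theta_{i_n}$ assemble into a log-concave measure on each cluster chamber, which is at present the truly speculative step.
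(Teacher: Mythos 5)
The statement you are addressing is \Cref{conj1}, which the paper leaves \emph{open}: it proves only special cases (\Cref{main1} for single cluster variables of type $A_n$ via Schiffler's $T$-paths, \Cref{main theorem} for cluster monomials of type $A_2$ via binomial identities, and \Cref{log-concavity of F poly} for $F$-polynomials). Your proposal is likewise not a proof, and you say as much, but the gap appears earlier than you locate it. The reduction in your first paragraph is incorrect: clearing denominators does not convert structure constants into Laurent coefficients, because multiplication by a cluster monomial $\theta_d$ does not act diagonally on the theta basis. Already in type $A_2$ one has $\frac{x_2+1}{x_1}\cdot x_1 = x_2+1 = \theta_{(0,1)}+\theta_{(0,0)}$, so the theta expansion of $\theta_{i_1}\cdots\theta_{i_n}\cdot\theta_d$ is not a reindexed copy of that of $\theta_{i_1}\cdots\theta_{i_n}$. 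The identification of structure constants with Laurent coefficients is legitimate only when the product is itself a polynomial in the variables of a single seed — e.g.\ $\theta_c\cdot\theta_d = P_c$ with $\theta_d$ the denominator monomial of \Cref{theorem:LE} — which is precisely the special case singled out in the second sentence of the conjecture and the only one the paper handles. For a general product, computing $c^j_{i_1,\dots,i_n}$ requires inverting the change of basis between theta functions and Laurent monomials, and log-concavity of Laurent coefficients neither implies nor follows from log-concavity of structure constants without further argument.

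The second gap is the one you flag yourself: the matching-swap injection along symmetric-difference cycles is the entire mathematical content of the finite-type case, and it is not constructed. It is worth noting how the paper's actual argument for \Cref{main1} avoids needing any such injection: using $T$-paths it shows that for a single cluster variable every exponent is $-1$, $0$ or $1$ and every coefficient is $0$, $1$ or $2$, and that whenever the outer two coefficients in \eqref{Laurent inequality} are nonzero they both equal $1$ while the middle one equals $2$, so the inequality reduces to $2^2\geq 1\cdot 1$. This rigidity disappears already for cluster monomials of type $A_2$, where the coefficients are products of binomial coefficients and the paper must switch to an explicit closed-form computation (\Cref{main theorem}); a general injection for products of snake-graph matching polynomials would have to reproduce genuine binomial-type inequalities, and no invariant selecting which alternating cycle to flip is offered. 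The folding argument for non-simply-laced types and the scattering-diagram argument beyond finite type are, as you acknowledge, programmatic rather than proved.
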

In this paper, we focus on studying the cluster algebras of type $A_n$. Recall that there is a geometric realization for them: triangulation. Afterwards, Schiffler \cite{Sch08} provided an expansion formula of cluster variables by $T$-path, see \Cref{Tpath}. Then, by use of the combinatorial and geometric information (intersection numbers) of $T$-path, we prove the main theorem as follows.
\begin{theorem}[Theorem \ref{main1}]
All the cluster variables of type $A_n$ are log-concave, that is the coefficient-free cluster algebras of type $A_n$ are log-concave.
\end{theorem}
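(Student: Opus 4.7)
The plan is to combine Schiffler's $T$-path expansion (\Cref{Tpath}) with a bijective count of $T$-paths organized by exponent vectors. Fix a triangulation $T$ of a polygon with interior arcs $\tau_1,\ldots,\tau_k$ giving the seed $(x_1,\ldots,x_k)$, and let $\gamma$ be an arc with cluster variable $x_\gamma$. Schiffler's formula writes $x_\gamma$ as a sum over complete $T$-paths $\alpha=(\alpha_1,\ldots,\alpha_{2d+1})$ from one endpoint of $\gamma$ to the other, where $d$ is the number of arcs of $T$ crossed by $\gamma$. The even-indexed arcs $\alpha_{2j}$ are forced, running through the fixed ordered sequence of crossed arcs, while the odd-indexed arcs $\alpha_{2j+1}$ record choices made inside each triangle traversed by $\gamma$. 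Hence $x_\gamma = P_\gamma(x_1,\ldots,x_k)/\mathbf{x}^{\mathbf{c}}$ with $\mathbf{c}$ the fixed crossing vector and $P_\gamma \in \mathbb{Z}_{\geq 0}[x_1,\ldots,x_k]$, so it suffices to prove that the coefficient function $\mathbf{a}\mapsto c_\mathbf{a}:= \#\{\alpha:\text{odd-indexed exponent of }\alpha\text{ equals }\mathbf{a}\}$ is log-concave in $\mathbf{a}$.

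Next I would reinterpret $c_\mathbf{a}$ through the intersection data of $\gamma$ with $T$. Writing $\Delta_0,\Delta_1,\ldots,\Delta_d$ for the triangles traversed by $\gamma$, in each $\Delta_j$ there are exactly two admissible choices for $\alpha_{2j+1}$ (the two sides of $\Delta_j$ distinct from the crossed arcs), subject to the coupling that $\alpha_{2j-1}$ and $\alpha_{2j+1}$ share the appropriate vertex. This produces a bijection between admissible $T$-paths and the perfect matchings of the snake graph $G_\gamma$ built from the triangle sequence, under which the exponent $a_i$ counts the number of edges of the matching labeled by $\tau_i$. Log-concavity of $c_\mathbf{a}$ is then equivalent to log-concavity of the weighted matching enumerator of $G_\gamma$ with its natural labeling by arcs of $T$, and the combinatorial reformulation makes the geometric intersection numbers directly visible.

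The substantive step is induction on $d$. The cases $d=0,1$ are immediate because $P_\gamma$ is a single monomial. For the inductive step, I would peel off the final triangle $\Delta_d$: the two choices for $\alpha_{2d+1}$ partition the $T$-paths into subfamilies corresponding to smaller arcs $\gamma',\gamma''$ (with snake graphs $G',G''$), yielding a recurrence of the form $c_\mathbf{a} = c'_{\mathbf{a}-\mathbf{e}_i} + c''_{\mathbf{a}-\mathbf{e}_j}$, where $\tau_i,\tau_j$ denote the two short sides of $\Delta_d$. The principal obstacle is that log-concavity is not preserved under arbitrary sums, so the inductive hypothesis alone does not close the argument. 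To get around this I would exploit the precise intersection-number data of the short sides along the triangle sequence, aiming to show that the shifted supports $\mathbf{a}-\mathbf{e}_i$ and $\mathbf{a}-\mathbf{e}_j$ interleave in exactly the way needed to propagate the bilinear inequalities $c_\mathbf{a}^2\geq c_{\mathbf{a}+\mathbf{u}}c_{\mathbf{a}-\mathbf{u}}$. Verifying this compatibility directly from the geometry of polygon triangulations, rather than from any general positivity principle such as Lorentzian polynomials (which are not available for theta bases in general), is the heart of the argument and is where I expect most of the real work to lie.
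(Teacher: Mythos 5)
There is a genuine gap: your proposal stops exactly where the proof has to begin. After setting up the $T$-path (equivalently, snake-graph/perfect-matching) expansion and the recurrence $c_{\mathbf a}=c'_{\mathbf a-\mathbf e_i}+c''_{\mathbf a-\mathbf e_j}$ obtained by peeling off the last triangle, you correctly note that log-concavity is not preserved under sums, and then write that you ``would aim to show'' the required interleaving of the two shifted supports, calling this the heart of the argument. No argument for that step is given, and it is not a routine verification: controlling how the supports of $c'$ and $c''$ overlap after the shifts by $\mathbf e_i$ and $\mathbf e_j$ is precisely the difficulty an inductive snake-graph approach must overcome, and nothing in the proposal indicates how to do it. A secondary inaccuracy: the even-indexed arcs of a $T$-path are \emph{not} forced to run through the entire crossing sequence of $\gamma$; $T$-paths of different lengths coexist (in Table~\ref{table1} the paths from $a$ to $b$ have lengths $3$, $5$ and $7$, with distinct denominators), so your identification of the numerator coefficients with a count of $T$-paths by ``odd-indexed exponent'' needs the common-denominator bookkeeping made explicit.

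The paper avoids induction altogether by exploiting a feature of type $A_n$ that your plan never uses: by rule (T3) each edge occurs at most once in a $T$-path, so every Laurent exponent of every $x_j$ in every term of a cluster variable lies in $\{-1,0,1\}$ (consistently with \Cref{geo d-vector}, since two diagonals of a polygon cross at most once). Log-concavity in the $j$-th variable therefore reduces to the single inequality $a_{\ldots,0,\ldots}^2\ge a_{\ldots,-1,\ldots}\,a_{\ldots,1,\ldots}$, and by \Cref{coeff 012} all coefficients lie in $\{0,1,2\}$. The paper then shows by a local analysis that two $T$-paths realizing exponents $-1$ and $+1$ on $T_j$ with all other exponents equal must traverse $T_j$ in opposite orientations and agree elsewhere in a rigid pattern; this forces $a_{\pm1}=1$ and lets one splice the two paths into two distinct $T$-paths realizing exponent $0$, giving $a_0=2$ and $2^2\ge 1\cdot 1$. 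To salvage your route you would need either to actually establish the interleaving property you postulate, or to import this exponent-range observation, after which the snake-graph recursion is no longer needed.
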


For the cluster algebras of type $A_2$, we consider the corresponding cluster monomials. By use of binomial coefficients, we also prove the log-concavity of them as follows and give a conjecture for general cases, see \Cref{An conj}.
\begin{theorem}[Theorem \ref{main theorem}]
	The cluster monomials of type $A_2$ are log-concave.
\end{theorem}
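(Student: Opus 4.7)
The plan is to exploit the cyclic symmetry of the type $A_2$ cluster algebra together with the explicit Laurent expansions of its five cluster variables, and reduce log-concavity to a product of binomial coefficients via a Vandermonde-type identity.

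Recall that type $A_2$ has exactly five cluster variables $x_1,\dots,x_5$ arranged cyclically, and every cluster monomial has the form $x_i^a x_{i+1}^b$ for some $i\in\mathbb{Z}/5\mathbb{Z}$ and nonnegative integers $a,b$. By the $\mathbb{Z}/5\mathbb{Z}$ symmetry of the cluster pattern, it suffices to check log-concavity of the Laurent expansion with respect to a single seed, which I take to be $\{x_1,x_2\}$, using the classical expressions
\[
x_3=\frac{1+x_2}{x_1},\qquad x_4=\frac{1+x_1+x_2}{x_1 x_2},\qquad x_5=\frac{1+x_1}{x_2}.
\]
For $i=1,2,5$ the expansion of $x_i^a x_{i+1}^b$ has at most a one-binomial numerator $(1+x_k)^m$, so log-concavity reduces at once to the classical $\binom{m}{k}^2\ge\binom{m}{k-1}\binom{m}{k+1}$. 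The involution $x_1\leftrightarrow x_2$ interchanges the remaining cases $i=3$ and $i=4$, so the essential work is to analyze $x_3^a x_4^b$.

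The numerator of $x_3^a x_4^b$ is $(1+x_2)^a(1+x_1+x_2)^b$, which I would simplify by the identity
\[
(1+x_1+x_2)^b=\sum_{p=0}^{b}\binom{b}{p}x_1^p(1+x_2)^{b-p}.
\]
Substituting and expanding the resulting $(1+x_2)^{a+b-p}$ yields
\[
(1+x_2)^a(1+x_1+x_2)^b=\sum_{p,q}\binom{b}{p}\binom{a+b-p}{q}\,x_1^p x_2^q,
\]
supported on the triangle $0\le p\le b$, $0\le q\le a+b-p$. Hence the Laurent coefficient of $x_3^a x_4^b$ at $x_1^{p-a-b}x_2^{q-b}$ is $c(p,q):=\binom{b}{p}\binom{a+b-p}{q}$.

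It remains to produce a log-concave function on $\mathbb{R}^2$ passing through the values $c(p,q)$. Log-concavity in the coordinate directions $(1,0)$ and $(0,1)$ is immediate from the classical log-concavity of rows of Pascal's triangle. For each diagonal direction $(1,\pm 1)$, I would split the ratio $c(p+1,q\pm 1)\,c(p-1,q\mp 1)/c(p,q)^2$ as a product of a factor in $\binom{b}{p}$ and a factor in $\binom{a+b-p}{q}$: the first is $\le 1$ by Pascal log-concavity, while the second reduces to the identities $\binom{n-1}{q-1}\binom{n+1}{q+1}\le\binom{n}{q}^2$ and $\binom{n-1}{q+1}\binom{n+1}{q-1}\le\binom{n}{q}^2$ on the support, both verifiable using the explicit ratios $\binom{n}{k}/\binom{n-1}{k}=n/(n-k)$ and $\binom{n}{k}/\binom{n}{k-1}=(n-k+1)/k$. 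With these directional inequalities in place, a standard concave interpolation on the triangular support yields the required log-concave function on $\mathbb{R}^2$. The main obstacle is precisely this multidimensional assembly: coordinatewise log-concavity alone does not guarantee a concave interpolant, so all integer directions must be checked---something made tractable here by the clean product-of-binomials form of $c(p,q)$, and pointing to why a more conceptual argument (e.g.\ via Lorentzian polynomials) is unlikely to work beyond type $A_2$ as mentioned in the abstract.
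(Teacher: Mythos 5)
Your proposal follows essentially the same route as the paper's proof: the same reduction of the five classes of cluster monomials to the single hard case $x_3^a x_4^b$ (with the others handled by one-variable binomial log-concavity, a Laurent-monomial factor, and the $x_1\leftrightarrow x_2$ symmetry), the same expansion $(1+x_1+x_2)^b=\sum_p\binom{b}{p}x_1^p(1+x_2)^{b-p}$ producing the single-term coefficients $\binom{b}{p}\binom{a+b-p}{q}$, and the same two binomial inequalities, including the key estimate $(C_n^k)^2\ge C_{n-1}^kC_{n+1}^k$ of \Cref{binomial2}. The only divergence is your additional verification of the diagonal directions $(1,\pm1)$: those inequalities are correct, but they are not required by \Cref{log-concave2}, which imposes only the coordinate-direction conditions, so the paper stops after the two axis checks (and note that your final claim that four directional inequalities yield a concave interpolant on the triangular support would itself need justification if one insisted on the geometric reformulation).
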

In the end, according to the geometric realization of $d$-vectors and the relations between $d$-vectors and $f$-vectors, we show the log-concavity of $F$-polynomials of type $A_n$ as follows.
\begin{theorem}[\Cref{log-concavity of F poly}]
All the $F$-polynomials of type $A_n$ are log-concave.
\end{theorem}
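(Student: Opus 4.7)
The plan is to deduce the log-concavity of the $F$-polynomials of type $A_n$ from the log-concavity of the cluster variables (\Cref{main1}) via the classical Fomin--Zelevinsky separation formula
$$x_\gamma \;=\; x^{g_\gamma}\, F_\gamma(\hat y_1,\ldots,\hat y_n), \qquad \hat y_j \;=\; \prod_{k=1}^{n} x_k^{b_{kj}},$$
together with the geometric realization of $d$-vectors and the known coincidence of the $d$-vector and the $f$-vector in type $A_n$.

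First, I would set up the geometric picture of type $A_n$: cluster variables $x_\gamma$ correspond to diagonals $\gamma$ of a triangulated polygon $P_{n+3}$, and the $d$-vector of $x_\gamma$ with respect to an initial triangulation $T=(T_1,\ldots,T_n)$ has entries $d_i(x_\gamma) = I(\gamma,T_i)$, the geometric intersection number. Combined with the $d$-vector $=$ $f$-vector equality for type $A_n$, this identifies the maximal $y_i$-degree of $F_\gamma$ with $I(\gamma,T_i)$ and pins down the Newton polytope of $F_\gamma$ in a purely combinatorial way.

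Second, I would use the $T$-path (or snake-graph) description of $F_\gamma$ to pin down its support combinatorially: each monomial $y^{\mathbf{a}}$ in $F_\gamma$ corresponds to a perfect matching of the snake graph of $\gamma$ with height vector $\mathbf{a}$. The affine map
$$L\colon \mathbb{Z}^n \to \mathbb{Z}^n, \qquad \mathbf{a}\mapsto g_\gamma + B\mathbf{a},$$
sends the support of $F_\gamma$ into the support of the Laurent expansion of $x_\gamma$, since substituting $\hat y$ yields $x_\gamma=\sum_{\mathbf{a}}c^{F}_{\mathbf{a}}\, x^{g_\gamma+B\mathbf{a}}$. The key combinatorial claim is that $L$ is injective on $\mathrm{supp}(F_\gamma)$, so that for each $\mathbf{a}$ in the support the coefficient of $y^{\mathbf{a}}$ in $F_\gamma$ equals the coefficient of $x^{g_\gamma+B\mathbf{a}}$ in $x_\gamma$.

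Third, I would invoke a standard fact: pre-composition with an affine map preserves log-concavity. By \Cref{main1}, there is a log-concave function $\phi\colon\mathbb{R}^n\to\mathbb{R}_{\geq 0}$ matching the coefficients of $x_\gamma$ on its support. Then $\phi\circ L$ is log-concave on $\mathbb{R}^n$, and by the injectivity of $L$ on $\mathrm{supp}(F_\gamma)$ it matches the coefficients of $F_\gamma$ on its support, yielding the desired log-concavity.

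The principal obstacle is the injectivity of $L$ on $\mathrm{supp}(F_\gamma)$ in Step 2: for odd $n$ the exchange matrix $B$ of a type $A_n$ seed is singular, so this cannot come from linear algebra alone. The resolution is to extract injectivity from the geometric model — checking via the intersection data of $\gamma$ with $T$ (equivalently, via the perfect-matching/$T$-path description) that two distinct matchings never produce the same Laurent monomial of $x_\gamma$. This step is where the geometric realization of $d$-vectors, and the bookkeeping ensured by the $d=f$ relation, is used most crucially.
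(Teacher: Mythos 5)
Your overall strategy is genuinely different from the paper's, but it founders on the ``key combinatorial claim'' of your Step 2: the map $L(\mathbf{a}) = g_\gamma + B\mathbf{a}$ is \emph{not} injective on the support of $F_\gamma$, and two distinct matchings \emph{can} produce the same Laurent monomial of $x_\gamma$. The paper's own \Cref{coeff 012} and \Cref{T-example} exhibit exactly this: for the zigzag triangulation of the hexagon one has $\hat y_1 = \hat y_3 = x_2$ (the first and third columns of $B$ coincide), the $F$-polynomial of the long diagonal is $1+y_2+y_1y_2+y_2y_3+y_1y_2y_3$ with all coefficients equal to $1$, and the two support points $(1,1,0)$ and $(0,1,1)$ collapse under $L$ to the single exponent responsible for the term $2x_2/(x_1x_2x_3)$ of $x_{a,b}$. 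In general the coefficient of $x^{L(\mathbf{a})}$ in $x_\gamma$ is the \emph{sum} of the $F$-coefficients over the fiber of $L$, so you cannot read off the individual coefficients of $F_\gamma$ from those of $x_\gamma$; this is precisely the caveat in the paper's remark following \Cref{separation}, namely that the separation formula yields log-concavity of $F_{i;t}(\hat y_1,\dots,\hat y_n)$ in the $x$-variables but not of $F_{i;t}(y_1,\dots,y_n)$ in the $y$-variables. A secondary issue, even granting injectivity, is that the unit step $e_j$ in the $y$-lattice corresponds to the step $Be_j$ in the $x$-lattice, whereas \Cref{log-concave2} only controls unit coordinate steps; you would have to pass through the interpolating-function formulation of \Cref{equivalent log-concave} and also verify its gap condition (1) for $F_\gamma$, neither of which you address.

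The statement itself is true, and the paper's proof is far shorter: by \Cref{geo d-vector} a diagonal $\gamma\notin T$ meets each diagonal of the triangulation $T$ at most once, so every entry of the $d$-vector is $0$ or $1$; by \Cref{gyo21} the same holds for the $f$-vector, i.e.\ $F_\gamma$ has degree at most $1$ in each $y_j$. A polynomial with non-negative coefficients that is multilinear is automatically log-concave in the sense of \Cref{log-concave2}, since with exponents confined to $\{0,1\}$ one of the two outer coefficients $a_{\dots,i_j-1,\dots}$, $a_{\dots,i_j+1,\dots}$ always vanishes. I would recommend abandoning the transfer-from-$x_\gamma$ route and arguing via this multilinearity directly.
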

It would also be very interesting to investigate Conjecture \ref{conj1} for the general decorated surface cases. 

The paper is organized as follows: In Section 2, we recall the basic definitions of semifields, seeds, cluster algebras and cluster monomials. Then, we introduce the definition of log-concavity of Laurent polynomials (\Cref{log-concave2}). In Section 3, we review the geometric realization of cluster algebras of type $A_n$. In particular, by use of $T$-path, we prove the \Cref{coeff 012}. In Section 4, we prove the log-concavity of cluster variables of type $A_n$ based on the combinatorial and geometric information of $T$-path, see \Cref{main1}. In Section 5, by use of binomial coefficients, we prove that the cluster monomials of type $A_2$ are log-concave (\Cref{main theorem}). In addition, we give a conjecture about type $A_n(n\geq 3)$, see \Cref{An conj}. Finally, in Section 6, we recall the definitions of $F$-polynomials and $f$-vectors. Then, by use of the relations between $f$-vectors and $d$-vectors, we prove the log-concavity of $F$-polynomials of type $A_n$ (\Cref{log-concavity of F poly}).

\section*{Conventions}
\begin{itemize}[leftmargin=1em]\itemsep=0pt
\item In this paper, the integer ring, the set of non-negative integers, the set of positive integers and the rational number field are denoted by $\mbZ$, $\mbN$, $\mbN_{+}$ and $\mbQ$ respectively. 
\item We denote by $\text{Mat}_{n\times n}(\mbZ)$ the set of all $n\times n$ integer square matrices. An integer square matrix $B$ is called \emph{skew-symmetrizable} if there exists a positive integer diagonal matrix $D$ such that $DB$ is skew-symmetric and $D$ is called the \emph{left skew-symmetrizer} of $B$. 
\item For any $a\in \mbZ$, we denote $[a]_{+}=\max(a,0)$. For any $B=(b_{ij})_{n\times n}\in \text{Mat}_{n\times n}(\mbZ)$, we denote $[B]_{+}=([b_{ij}]_{+})_{n\times n}$.
\item Let $B^{k\bullet}$ be the matrix obtained from $B$ by replacing all entries outside of the $k$-th row with zeros and $B^{\bullet k}$ be the matrix obtained from $B$ by replacing all entries outside of the $k$-th column with zeros. We denote $J_k$ the diagonal matrix obtained from the identity matrix by replacing the $k$-th diagonal entry with $-1$.

\end{itemize}

\section*{Acknowledgment}
Z. Chen is supported by the CSC grant 202406340022 and Z. Sun is supported by the NSFC grant 12471068.

\section{Preliminaries}
In this section, we recall some basic but important notions about cluster algebras as introduced by Fomin-Zelevinsky \cite{FZ02,FZ03,FZ04}. In addition, we introduce the definition of log-concavity of Laurent polynomials and their properties.
\subsection{Semifields and cluster algebras}\

Firstly, in this subsection, we recall some definitions and properties of semifields and cluster algebras.
\begin{definition}[\emph{Semifield}]
A \emph{semifield} is a multiplicative abelian group $(\mbP,\cdot)$ which is equipped with an additive operation $\oplus$ such that for any $a,b,c \in \mbP$,
\begin{enumerate}[leftmargin=2em]
	\item $a \oplus b=b \oplus a$,
	\item $(a \oplus b)c = ac \oplus bc$,
	\item $(a \oplus b) \oplus c = a \oplus (b \oplus c).$
\end{enumerate}
Then, we denote it by $(\mbP, \cdot, \oplus)$.
\end{definition}
\begin{example} The following examples are two important semifields.
\begin{enumerate}[leftmargin=2em]
	\item Let $\mbP_{\text{triv}}=\{1\}$ be a trivial multiplicative group equipped with addition $\oplus$ such that $1\oplus 1=1$. Then, $\mbP_{\text{triv}}$ becomes a semifield and it is called a \emph{trivial semifield}.
	\item  Let $\mbP_{\text{trop}}=\text{Trop}(u_1,\dots, u_n)$ be a multiplicative abelian group freely generated by formal variables $u_1,\dots,u_n$ with addition $\oplus$ as follows:
\begin{align}
\prod_{i=1}^n u_i^{a_i} \oplus \prod_{i=1}^{n} u_i^{b_i}=\prod_{i=1}^{n} u_j^{\min(a_i,b_i)}.
\end{align}

\end{enumerate}
\end{example} 
Now, we fix $n\in \mbN_{+}$. According to \cite{FZ02}, the group ring $\mbZ\mbP$ is a domain and we can construct its fractional field, which is denoted by $\mbQ\mbP$. Let $\mcF$ be a rational function field with $n$ indeterminates over $\mbQ\mbP$ and we call it the \emph{ambient field}.
\begin{definition}[\emph{Seeds}]
	A \emph{labeled seed} is a triple $(\mfx,\mfy,B)$ such that $\mfx=(x_1,\dots,x_n)$ is an $n$-tuple of algebraically independent generating elements of $\mcF$, $\mfy=(y_1,\dots,y_{n})$ is an $n$-tuple of $\mbP$ and $B=(b_{ij})_{n\times n}$ is a skew-symmetrizable matrix. We call the $n$-tuple $\mfx$ \emph{cluster}, the element $x_i$ \emph{cluster variable}, the element $y_i$ \emph{coefficient variable} and $B$ the \emph{exchange matrix} respectively.
\end{definition}
Furthermore, there is an important notion of the mutation of a seed.
\begin{definition}[\emph{Seed mutations}] 
	Let $(\mfx,\mfy,B)$ be a labeled seed and $1\leq k\leq n$, we define a new labeled seed $\mu_k(\mfx,\mfy,B)=(\mfx^{\prime},\mfy^{\prime},B^{\prime})$ as follows: \begin{enumerate}[leftmargin=2em]
	\item $\mfx^{\prime}=(x_{1}^{\prime},\dots,x_{n}^{\prime})$, where \begin{align} \label{x-var}
		x_{i}^{\prime}=\left\{
		\begin{array}{ll}
			x_{k}^{-1}(\dfrac{y_{k}}{1 \oplus y_{k}}\prod\limits_{j=1}^{n}x_{j}^{[b_{jk}]_{+}}+\dfrac{1}{1 \oplus y_{k}}\prod\limits_{j=1}^{n}x_{j}^{[-b_{jk}]_{+}}), &   i=k, \\
			x_{i}, &   i \neq k. 
		\end{array} \right.
	\end{align}
\item $\mfy^{\prime}=(y_1^{\prime},\dots,y_{n}^{\prime})$, where 	\begin{align} \label{y-var}
		y_{i}^{\prime}=\left\{
		\begin{array}{ll}
			y_{k}^{-1}, &   i=k, \\
			y_{i}y_{k}^{[b_{ki}]_{+}}(1\oplus y_{k})^{-b_{ki}}, &   i \neq k.
		\end{array} \right.
	\end{align}
\item $B^{\prime}=(b_{ij}^{\prime})_{n\times n}$ is given by 
	\begin{align} \label{matrix mutation}
		b_{ij}^{\prime}=\left\{
		\begin{array}{ll}
			-b_{ij}, &   i=k \;\;\mbox{or}\;\; j=k, \\
			b_{ij}+[b_{ik}]_{+}b_{kj}+b_{ik}[-b_{kj}]_{+}, &   i\neq k \;\;
			\mbox{and}\; j\neq k. 
		\end{array} \right.
	\end{align}
\end{enumerate}
In fact, $(\mfx^{\prime},\mfy^{\prime},B^{\prime})$ is still a seed and $\mu_k$ is an involution, that is $\mu_k(\mfx^{\prime},\mfy^{\prime},B^{\prime})=(\mfx,\mfy,B)$, see \cite{Nak23}. Then, $(\mfx^{\prime},\mfy^{\prime},B^{\prime})$ is called the \emph{k-direction mutation} of $(\mfx,\mfy,B)$.
\end{definition}
\begin{remark}\label{equiv coeff}
	The coefficients were defined in \cite[Definition 5.3]{FZ02} and \cite[Section 1.2]{FZ03} as a $2n$-tuple $\mfp=(p_1^{\pm 1},\dots,p_n^{\pm 1})$ of $\mbP$, such that $p_i^{+}\oplus p_i^{-}=1$ for any $i\in \{1,\dots,n\}$. According to \cite[Formulas (5.2) \& (5.3)]{FZ02}, the two setups are equivalent  by setting 
	\begin{align}
		y_i=\dfrac{p_i^{+}}{p_i^{-}},
	\end{align}
	and $p_i^{\pm 1}$ can be recovered by 
	\begin{align}
		p_i^{+}=\dfrac{y_i}{y_i\oplus 1},\ p_i^{-}=\dfrac{1}{y_i\oplus 1}.
	\end{align}
\end{remark}
\begin{definition}[\emph{Cluster patterns}] 
	A \emph{cluster pattern} $\mathbf{\Sigma}=\{(\mfx_t,\mfy_t,B_t)|\ t\in \mbT_n\}$ is a collection of labeled seeds which are indexed by the vertices of $n$-regular tree $\mbT_{n}$, such that $(\mfx_{t^{\prime}},\mfy_{t^{\prime}},B_{t^{\prime}})=\mu_k(\mfx_t,\mfy_t,B_t)$ for any $t \stackrel{k}{\longleftrightarrow} t^{\prime}$ in $\mbT_n$. In the following, we use the notations that $$\mathbf{x}_{t}=(x_{1;t},\dots,x_{n;t}),\ \mathbf{y}_{t}=(y_{1;t},\dots,y_{n;t}),\  B_{t}=(b_{ij;t})_{n\times n}.$$ For an arbitrary fixed vertex $t_0\in \mbT_n$, we call the seed $(\mfx_{t_0},\mfy_{t_0},B_{t_0})$ \emph{initial seed} and denote the \emph{initial cluster} by $\mathbf{x}_{t_0}=\mfx=(x_{1},\dots,x_{n})$, the \emph{initial coefficients} by $\mathbf{y}_{t_0}=\mfy=(y_{1},\dots,y_{n})$ and the \emph{initial exchange matrix} by $B_{t_0}=B=(b_{ij})_{n\times n}$.  

\end{definition}
Now, we are ready to define the crucial notions of cluster algebras.
\begin{definition}[\emph{Cluster algebras}]
	For a cluster pattern $\mathbf{\Sigma}$, the \emph{cluster algebra} $\mcA=\mcA(\mathbf{\Sigma})$ is the $\mbZ\mbP$-subalgebra of $\mcF$ generated by all the cluster variables $\{x_{i;t}\mid i=1,\dots,n;t\in\mbT_{n}\}$. Here, $n$ is called the \emph{rank} of $\mcA$ or $\mathbf{\Sigma}$.
	\end{definition}
Furthermore, if $\mbP=\mbP_{\text{trop}}=\text{Trop}(x_{n+1},\dots, x_m)$, then $\mcA$ is called \emph{of geometric type}. 
If $\mbP=\mbP_{\text{triv}}$, then $\mcA$ is said to be \emph{coefficient-free} or \emph{with trivial coefficients}. In this case, the seed $(\mfx,\mfy,B)$ can be reduced to $(\mfx,B)$. Furthermore, $\mbZ\mbP$ and $\mbQ\mbP$ can be reduced to $\mbZ$ and $\mbQ$ respectively, that is the cluster algebra $\mcA$ is a $\mbZ$-subalgebra of $\mcF$ generated by all the cluster variables. 

\begin{theorem}[{\cite[Theorem 3.1]{FZ02},\cite[Theorem 4.10]{GHKK18}}]
\label{theorem:LE}
In a cluster algebra $\mcA$, any cluster variable is a Laurent polynomial in terms of the initial cluster with coefficients in $\mbN\mbP$.
	
\end{theorem}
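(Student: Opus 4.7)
The plan is to split the statement into two pieces: the \emph{Laurent property} (originally Theorem 3.1 of \cite{FZ02}) and the \emph{positivity} of the coefficients in $\mbN\mbP$ (the considerably deeper result of \cite{GHKK18}). For the Laurent phenomenon, I would induct on the graph distance $d(t_0,t)$ in $\mbT_n$. For $d\leq 1$ the exchange relation \eqref{x-var} already displays the mutated cluster variable as a Laurent polynomial in $\mfx_{t_0}$ with coefficients in $\mbN\mbP$. The induction step is delicate because composing Laurent expressions through an intermediate cluster can in principle introduce cancellations in the denominators.

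The standard device for handling this is the \emph{Caterpillar Lemma} of Fomin--Zelevinsky. Along a path $t_0 \to t_1 \to \cdots \to t_d = t$ I would track a target cluster variable $x_{i;t}$ through a nested sequence of three-vertex ``caterpillar'' neighborhoods, showing that $x_{i;t}$ is simultaneously a Laurent polynomial in several adjacent clusters that pairwise agree in all but one entry. Because the initial cluster variables are algebraically independent and the Laurent polynomial ring over $\mbZ\mbP$ is a UFD, the exchanged variable in each step is prime, and a common Laurent expression on two such clusters forces the offending denominator to divide a single initial variable. Collating these local statements yields Laurentness in $\mfx_{t_0}$.

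For positivity, I would follow the scattering-diagram approach of \cite{GHKK18}. The plan is to associate to the initial seed a consistent scattering diagram, realize each cluster monomial as a theta function $\vartheta_q$ indexed by a tropical point $q$, and then expand $\vartheta_q$ as a sum over broken lines with generic endpoint. Each broken line contributes a monomial in $\mfx_{t_0}$ whose coefficient is a product of wall-crossing factors, and all such factors lie in $\mbN\mbP$ because the wall functions of a cluster scattering diagram have nonnegative coefficients. Summing over the finitely many broken lines ending at a given exponent yields the desired Laurent expansion with coefficients in $\mbN\mbP$.

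The main obstacle lies entirely in the positivity half: the Laurent step is classical and essentially bookkeeping, but the construction and consistency of the scattering diagram, together with the identification of cluster variables as theta functions, requires a substantial amount of machinery---wall-crossing automorphisms, the injectivity hypothesis on $B$, and the finiteness of broken-line contributions to a fixed monomial. Verifying these ingredients in the general skew-symmetrizable setting is exactly the nontrivial technical core of \cite{GHKK18}, and a self-contained proof proposal beyond citing those results would have to reproduce that entire framework.
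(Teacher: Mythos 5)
The paper does not prove this statement itself---it is quoted verbatim from \cite{FZ02} and \cite{GHKK18}---and your outline faithfully reproduces the standard arguments of those two references: the Caterpillar Lemma for the Laurent property and the scattering-diagram/broken-line expansion for positivity of the coefficients in $\mbN\mbP$. Your decomposition and identification of where the real difficulty lies (the positivity half) are both correct, so there is nothing to add.
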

More precisely, we have the Laurent expression of $x_{i;t}$ in terms of the initial cluster $\mfx=(x_1,\dots,x_n)$ as follows:\begin{align}
	x_{i;t}=\dfrac{N_{i;t}(x_1,\dots, x_n)}{x_1^{d_{1i;t}}\cdots x_n^{d_{ni;t}}},\  d_{ji;t}\in \mbZ, \label{express of d-vector}
\end{align} where $N_{i;t}(x_1,\dots, x_n)$ is a polynomial with coefficients in $\mbN\mbP$ which is not divisible by any $x_j$. The integer vector $\mfd_{i;t}=(d_{ji;t})_{j=1}^n$ is called the \emph{denominator vector} ($d$-vector) of $x_{i;t}$. In fact, the recurrence relations for $\mfd$-vectors are given as follows, cf. \cite[Section 4.3]{FZ04}. The initial conditions are $\mfd_{l;t_0}=-\mfe_{l}$ and the recursion formula is  
		\begin{align} \label{d-vector}
		\mfd_{l;t^{\prime}}=\left\{
		\begin{array}{ll}
			\mfd_{l;t}, &   l\neq k, \\
			-\mfd_{k;t}+\text{max}\ (\sum\limits_{i=1}^n[b_{ik;t}]_{+}\mfd_{i;t},\sum\limits_{i=1}^n[-b_{ik;t}]_{+}\mfd_{i;t}), &   l=k,
		\end{array} \right.
	\end{align}
	for $t \stackrel{k}{\longleftrightarrow} t^{\prime}$ in $\mbT_n$.
\begin{definition}[\emph{Finite type}]
A cluster algebra $\mcA$ is called \emph{of finite type} if it contains finitely many distinct seeds. Otherwise, it is called \emph{of infinite type}.
\end{definition} 
Let $B=(b_{ij})_{n\times n}$ be a skew-symmetrizable integer matrix whose \emph{Cartan counterpart} is a symmetrizable generalized Cartan matrix $A=A(B)=(a_{ij})_{n\times n}$, where 
\begin{align}
	a_{ij}=\left\{
		\begin{array}{ll}
			2, &   i=j, \\
			-|b_{ij}|, &   i \neq j.
		\end{array} \right.
\end{align}
\begin{theorem}[{\cite[Theorem 1.8]{FZ03}}] A cluster algebra is of finite type if and only if it contains an exchange matrix $B$ whose Cartan counterpart $A(B)$ is a Cartan matrix of finite type.
	
\end{theorem}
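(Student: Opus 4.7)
The statement is an ``if and only if'', so the plan is to handle the two directions separately, with quite different techniques. My overall strategy follows the route laid out in \cite{FZ03}, organized around the combinatorics of root systems and a mutation-invariant boundedness property of the exchange matrix.

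For the ``if'' direction, suppose $\mcA$ has a seed whose exchange matrix $B$ has Cartan counterpart $A(B)$ of finite type. I would first reduce to the case where $B$ is bipartite, meaning that $\{1,\dots,n\}$ splits into two classes $I_+\sqcup I_-$ with $b_{ij}=0$ whenever $i,j$ lie in the same class. This is always achievable when $A(B)$ is of finite type because the underlying Dynkin diagram is a tree, hence bipartite, and any two sign-coherent orientations of the edges are related by a sequence of mutations at sources/sinks. Next, I would define the compound mutations $\tau_\pm=\prod_{k\in I_\pm}\mu_k$ and study the orbit of the initial seed under the alternating action of $\tau_+$ and $\tau_-$. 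The key combinatorial input is a Zamolodchikov-type periodicity: the composition $\tau_+\tau_-$ has finite order on seeds, with period controlled by the Coxeter number of $A(B)$. From this I would deduce finiteness by constructing an explicit bijection between cluster variables and the set $\Phi_{\geq -1}$ of almost positive roots of the root system attached to $A(B)$; since $\Phi_{\geq -1}$ is finite, so is the collection of seeds.

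For the ``only if'' direction, suppose $\mcA$ is of finite type. The plan is to show that the Cartan counterpart of at least one exchange matrix $B_t$ in the mutation class is a Cartan matrix of finite type, equivalently, a positive-definite symmetrizable integer matrix. Finiteness of $\mcA$ forces the set of exchange matrices $\{B_t\mid t\in\mbT_n\}$ to be finite, which in turn implies the ``2-finiteness'' property $b_{ij;t}b_{ji;t}\in\{0,-1,-2,-3\}$ at every seed (otherwise \eqref{x-var} and \eqref{matrix mutation} would generate infinitely many cluster variables). I would then classify, up to mutation equivalence, all 2-finite skew-symmetrizable diagrams; comparing with the Cartan--Killing list $A_n,B_n,C_n,D_n,E_{6,7,8},F_4,G_2$, I would exhibit in each mutation class a seed whose diagram is one of these Dynkin diagrams, so that its Cartan counterpart is of finite type.

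The main obstacle is twofold. On the ``if'' side, it is the construction of the bijection with almost positive roots together with the verification that the root-theoretic ``compatibility degree'' on $\Phi_{\geq -1}$ matches cluster compatibility (two cluster variables lying in a common cluster). On the ``only if'' side, it is the diagram classification: one must rule out, by explicit mutation surgery, all mutation-equivalence classes of 2-finite diagrams outside the Dynkin list, while simultaneously confirming that each Dynkin type does produce a finite cluster algebra, which loops back into the ``if'' direction. This bootstrapping between the two halves, mediated by the unexpected appearance of the Cartan--Killing classification, is the delicate core of the argument.
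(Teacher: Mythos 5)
This statement is quoted verbatim from \cite{FZ03} and the paper offers no proof of it, so there is nothing internal to compare against; your proposal is, in effect, a sketch of the original Fomin--Zelevinsky argument, and it correctly identifies its main components (reduction to a bipartite seed via source/sink mutations on a tree, the compound mutations $\tau_\pm$ and Zamolodchikov-type periodicity, the bijection of cluster variables with almost positive roots for the ``if'' direction; $2$-finiteness and the classification of $2$-finite diagrams for the ``only if'' direction). Two small caveats: the phrase ``sign-coherent orientations'' is not the right condition --- what one uses is simply that any two orientations of a tree are related by reflections at sources and sinks; and the two directions do not genuinely ``bootstrap'' into one another --- the ``if'' direction is established independently, and the diagram classification then invokes it, so there is no circularity. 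As a plan this is sound, but of course the two hard steps (the periodicity/root-system bijection and the exhaustive classification of $2$-finite diagrams) are named rather than carried out.
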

In particular, when the  cluster algebra $\mcA$ is of rank $2$, whose initial exchange matrix is given by \begin{align}
	\begin{pmatrix}
	0 & b \\ -c & 0
\end{pmatrix},
\end{align}
where  $b,c \in \mbN$. Then, it is of finite type if and only if $0\leq bc\leq 3$. If $bc=4$, the cluster algebra $\mcA$ is called of \emph{affine type}. If $bc\geq 5$, it is called of \emph{non-affine type}.
\begin{definition}[\emph{Dynkin type}]
	Let $X_n\ (\text{e.g.}\,A_n, B_n, \dots)$ be a Dynkin diagram with $n$ vertices. A cluster algebra $\mcA$ is called \emph{of type $X_n$} if one of its exchange matrices $B$ has Cartan counterpart of type $X_n$.
\end{definition}
\begin{remark}
	The cluster algebra $\mcA$ is of type $A_n, D_n, E_6, E_7$ or $E_8$ if and only if one of its exchange matrices $B$ corresponds to a quiver which is an orientation of a Dynkin diagram of the type above.
\end{remark}
There is another crucial notion of cluster monomials.
\begin{definition}[\emph{Cluster monomials}]\label{cluster monomials}
	A \emph{cluster monomial} is a product of non-negative powers of cluster variables which all belong to a same cluster. More precisely, for any $t\in \mbT_n$, the set of cluster monomials is $\{x_{1;t}^{m_1}\dots x_{n;t}^{m_n}\mid m_1,\dots,m_n\in \mbZ_{\geq 0}\}$.
\end{definition}
The notion of cluster monomials plays an important role in cluster theory. In \cite[Theorem 7.20]{GHKK18}, it was proved that for a cluster algebra, all distinct cluster monomials are linearly independent over $\mbZ$. In addition, the cluster monomials are contained in the \emph{theta functions}. When the cluster algebra is of finite type, theta functions are strictly cluster monomials. Here, for brevity, we do not recall the definition of theta functions.  
\begin{example}[\emph{Coefficient-free $A_2$ type}]\label{A2 type} Let $(\mfx,B)$ be the initial seed, where \begin{align}
	\mfx=(x_1,x_2),\ B=\begin{pmatrix}0 & 1\\ -1 & 0\end{pmatrix}.
\end{align}
	Note that the (labeled) clusters are 10-periodic and they are as follows:
$$\begin{array}{c}
(x_1,x_2),\ (\frac{x_2+1}{x_1},x_2),\ (\frac{x_2+1}{x_1},\frac{x_1+x_2+1}{x_1x_2}),\ (\frac{x_1+1}{x_2},\frac{x_1+x_2+1}{x_1x_2}),\ (\frac{x_1+1}{x_2},x_1),\\
(x_2,x_1),\ (x_2,\frac{x_2+1}{x_1}),\ (\frac{x_1+x_2+1}{x_1x_2},\frac{x_2+1}{x_1}),\ (\frac{x_1+x_2+1}{x_1x_2},\frac{x_1+1}{x_2}),\ (x_1,\frac{x_1+1}{x_2}).
\end{array}$$
Hence, there are five classes of cluster monomials: 
$$\begin{array}{l}
	x_1^{m_1}x_2^{m_2}, (\frac{x_2+1}{x_1})^{m_1}x_2^{m_2}, (\frac{x_2+1}{x_1})^{m_1}(\frac{x_1+x_2+1}{x_1x_2})^{m_2}, (\frac{x_1+1}{x_2})^{m_1}(\frac{x_1+x_2+1}{x_1x_2})^{m_2}, (\frac{x_1+1}{x_2})^{m_1}(x_1)^{m_2}, 
\end{array}$$
where $m_1,m_2 \in \mbZ_{\geq 0}$.
\end{example}
\begin{remark}
	Note that the cluster monomials of type $A_2$ are independent of the choices of the initial exchange matrix. That is, if the initial exchange matrix is $-B$, then the cluster monomials are same as above. However, the cases of type $A_n\ (n\geq 3)$ are quite different.
\end{remark}

\subsection{Log-concavity of Laurent polynomials}\

In this subsection, we introduce the notion of Laurent polynomials and its relation with concave functions.
\begin{definition}[\emph{Concave functions}] \label{concaveD}
Let $\Omega\subseteq \mbR^{m}$ be a convex set.
	A function $f(x_1,\dots,x_m): \Omega\rightarrow \mbR$ is said to be \emph{concave} if for any $\mfx, \mfy \in \Omega$ and $\lambda\in [0,1]$, then \begin{align}f((1-\lambda)\mfx+\lambda\mfy)\geq (1-\lambda)f(\mfx)+\lambda f(\mfy).\end{align}
\end{definition}

\begin{definition}[\emph{Log-concavity of Laurent polynomials}]\label{log-concave2}
	A nonzero Laurent polynomial with non-negative real coefficients and $m$ variables \begin{align}f(x_1,\dots,x_m)=\sum\limits_{i_1=l_1}^{n_1}\dots\sum\limits_{i_m=l_m}^{n_m}a_{i_1,\dots,i_m}x_1^{i_1}\dots x_m^{i_m} \label{Laurent log} 
	\end{align} is called \emph{log-concave} if for any $1\leq j\leq m$ and $l_j\leq i_j\leq n_j$, \begin{align}a^2_{i_1,\dots,i_j,\dots,i_m}\geq a_{i_1,\dots,i_{j}-1,\dots,i_m}a_{i_1,\dots,i_{j}+1,\dots,i_m},\label{Laurent inequality}\end{align} where setting $a_{i_1,\dots,l_j-1,\dots,i_m}=a_{i_1,\dots,n_{j}+1,\dots,i_m}=0$.
\end{definition}

\begin{example}\label{log-concave examples}\
There are several examples and counter-examples as follows.
	\begin{enumerate}[leftmargin=2em]
		\item The polynomial with one variable $f(x)=(x+1)^n$ is log-concave since the binomial coefficients $C_n^k=\frac{n!}{k!(n-k)!}$ satisfy that 
		\begin{align}
			\frac{(C_n^k)^2}{C_n^{k-1}C_n^{k+1}}=\frac{(n-k+1)(k+1)}{(n-k)k}> 1.
		\end{align} Note that $x+1$ is log-concave, but $x^2+1$ is not log-concave.
		\item The Laurent polynomial with three variables \begin{align}f(x_1,x_2,x_3)=\dfrac{1+2x_2+x_2^2+x_1x_3}{x_1x_2x_3}\end{align} is log-concave by direct calculation. In fact, it is a cluster variable of type $A_3$, see \Cref{T-example}.
	\end{enumerate}
\end{example}
\begin{remark}\label{same up}
	Note that any two Laurent polynomials which are same up to a Laurent monomial factor keep the same property of log-concavity. In particular, when the cluster algebra is coefficient-free, by  formula \eqref{express of d-vector}, $x_{i;t}$ is log-concave if and only if $N_{i;t}(x_1,\dots,x_n)$ is log-concave.
\end{remark}
For the nonzero Laurent polynomial \eqref{Laurent log}, let $I=\{(i_1,\dots,i_m)\in \mbZ^m|\ a_{i_1,\dots,i_m}>0 \}$ and then it is a non-empty set.
If the three consecutive terms with respect to \eqref{Laurent inequality} all belong to $I$, we take the logarithm  and have \begin{align} \ln(a_{i_1,\dots,i_j,\dots,i_m})\geq \dfrac{1}{2}\ln(a_{i_1,\dots,i_{j}-1,\dots,i_m})+\dfrac{1}{2}\ln(a_{i_1,\dots,i_{j}+1,\dots,i_m}).\end{align} Hence, it is direct that the \Cref{log-concave2} can be defined equivalently by a concave function passing through some integer points as follows.
\begin{lemma}\label{equivalent log-concave}
	The nonzero Laurent polynomial with non-negative real coefficients \eqref{Laurent log} is log-concave if and only if there exists a function $g(x_1,\dots,x_n):\Omega \rightarrow \mbR_{> 0}$, where $\Omega$ is a convex set containing $I$ such that:
	\begin{enumerate}
		\item If $a_{i_1,\dots,i_j-1,\dots,i_m}a_{i_1,\dots,i_j+1,\dots,i_m}>0$, then $a_{i_1,\dots,i_j,\dots,i_m}>0$.
		\item $g(i_1,\dots,i_m)=a_{i_1,\dots,i_m}$, where $(i_1,\dots,i_m)\in I$.
		\item $G(\mfx)=\ln(g(x_1,\dots,x_m))$ is concave over $\Omega$.
	\end{enumerate}
\end{lemma}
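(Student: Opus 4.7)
The plan is to verify the two directions of the equivalence separately. The $(\Leftarrow)$ direction is a direct case analysis using the concavity of $G$ at midpoints on the integer lattice, while the $(\Rightarrow)$ direction requires constructing the function $g$ from the coefficient data and is where the real work lies.

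For the $(\Leftarrow)$ direction, fix a coordinate $j$ and a multi-index, and abbreviate the three relevant coefficients as $a = a_{i_1,\dots,i_j,\dots,i_m}$, $a_- = a_{i_1,\dots,i_j-1,\dots,i_m}$, $a_+ = a_{i_1,\dots,i_j+1,\dots,i_m}$. If either $a_-$ or $a_+$ vanishes, the inequality \eqref{Laurent inequality} reduces to $a^2\geq 0$ and is trivial. Otherwise both neighbors lie in $I$; condition $(1)$ forces $a>0$ as well, so all three integer points lie in $I$, and condition $(2)$ identifies $g$ with the polynomial coefficients at these points. The index $(i_1,\dots,i_j,\dots,i_m)$ is exactly the midpoint of $(i_1,\dots,i_j\mp 1,\dots,i_m)$, so applying concavity of $G=\ln g$ from condition $(3)$ yields $2G(i)\geq G(i-e_j)+G(i+e_j)$, which exponentiates to $a^2\geq a_- a_+$.

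For the $(\Rightarrow)$ direction, condition $(1)$ is immediate from \eqref{Laurent inequality}: if $a_-a_+>0$ then $a^2\geq a_-a_+>0$, forcing $a>0$. For the construction, the natural choice is to take $\Omega$ to be the convex hull of $I$ in $\mbR^m$ and $G:\Omega\to\mbR$ to be the upper concave envelope of the function sending $i\in I$ to $\ln(a_i)$ (and $-\infty$ elsewhere):
\begin{align*}
G(\mfx)=\sup\Big\{\sum_k\lambda_k\ln(a_{i^{(k)}}) : i^{(k)}\in I,\ \lambda_k\geq 0,\ \sum_k\lambda_k=1,\ \sum_k\lambda_k i^{(k)}=\mfx\Big\}.
\end{align*}
Then $g=\exp G$ is positive and $G$ is concave on $\Omega$ by construction, so condition $(3)$ holds automatically.

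The main obstacle will be verifying condition $(2)$, i.e., that $G(i)=\ln(a_i)$ for every $i\in I$; equivalently, that each point $(i,\ln(a_i))$ lies on the upper boundary of the convex hull of $\{(i',\ln(a_{i'})):i'\in I\}$. The plan is to build, for each such $i$, a supporting affine functional $L:\mbR^m\to\mbR$ with $L(i)=\ln(a_i)$ and $L(i')\geq\ln(a_{i'})$ for all $i'\in I$, constructed coordinate-by-coordinate from the discrete midpoint inequalities \eqref{Laurent inequality}. This is the standard reduction of a concave-extension problem to finitely many one-dimensional checks; because \eqref{Laurent inequality} holds in each coordinate direction separately for every index in the bounding box $\prod_j[l_j,n_j]$, the pointwise slopes of $\ln(a_\bullet)$ are monotonically decreasing along each axis, and the resulting families of affine pieces can be assembled into the required supporting hyperplane at each $i\in I$.
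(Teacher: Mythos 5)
Your $(\Leftarrow)$ direction is correct, and it is essentially all the paper itself supplies: the paper's ``proof'' consists of taking logarithms of the three-term inequality and declaring the equivalence ``direct'', so there is no hidden argument you failed to reproduce. Condition (1) in your $(\Rightarrow)$ direction is also fine, as is the definition of the upper concave envelope. The gap is exactly where you flagged it: the claim that the coordinate-wise inequalities \eqref{Laurent inequality} yield, at each $i\in I$, a supporting affine functional for the data $\{(i',\ln a_{i'}): i'\in I\}$. That claim amounts to the implication ``concave along each coordinate direction $\Rightarrow$ jointly concave'', which is false for $m\geq 2$; and the discrete inequalities are even weaker than separate concavity, because each instance of \eqref{Laurent inequality} becomes vacuous as soon as one neighboring coefficient vanishes, so they impose no constraint across gaps in the support or along diagonal directions.

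This gap cannot be closed, because the ``only if'' direction of the lemma is false as stated. Two explicit failures: (i) for $m=2$ take $a_{ij}=e^{ij}$, $0\leq i,j\leq 2$; every instance of \eqref{Laurent inequality} holds (with equality in the interior, trivially on the boundary), so the polynomial is log-concave in the sense of \Cref{log-concave2}, yet any $G$ concave (in the sense of \Cref{concaveD}) on a convex set containing $I=\{0,1,2\}^2$ with $G(i,j)=ij$ would have to satisfy $G(1,1)\geq \tfrac12\bigl(G(0,0)+G(2,2)\bigr)=2>1$, so $(1,1)$ admits no supporting affine functional and no interpolating concave $G$ exists. (ii) Already for $m=1$, the polynomial $100+x+100x^4$ satisfies \Cref{log-concave2} and condition (1) vacuously (every relevant product $a_{i-1}a_{i+1}$ is $0$), yet any concave $G$ on a convex set containing $I=\{0,1,4\}$ forces $G(1)\geq \tfrac34 G(0)+\tfrac14 G(4)=\ln 100>0=\ln a_1$. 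Your strategy does work in the cases where the statement is true (e.g.\ $m=1$ with support an interval, where the slope-monotonicity argument is complete); to salvage the general statement one must either weaken condition (3) to concavity along each coordinate line together with a connectedness hypothesis on the support, or restrict to the polynomials actually used later in the paper, and in the latter case the diagonal obstruction of example (i) still has to be ruled out by a separate argument rather than by \eqref{Laurent inequality} alone.
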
 

\begin{remark}
We give the above equivalent geometric definition as \Cref{concaveD} in order to match up with the definition for all the cases right before \Cref{conj1} and provide a visually geometric interpretation of log-concavity. However, in what follows, we always use the \Cref{concaveD}.
\end{remark}
\section{Geometric realization of cluster algebras of type $A_n$}
In this section, we recall some basic notions of geometric realization of cluster algebras of type $A_n$ based on \cite{FZ03}. We also recall a crucial expansion formula of cluster variables of type $A_n$ by use of $T$-path, which is given by Schiffler \cite{Sch08} and \cite{ST09}. 
\subsection{Triangulation of $(n+3)$-gons}\label{3.1}\

In the following, let $\mcP_{n+3}$ be a convex polygon with $n+3$ vertices. A \emph{boundary} is a line segment connecting two adjacent vertices of $\mcP_{n+3}$. A  \emph{diagonal} is a line segment connecting two non-adjacent vertices of $\mcP_{n+3}$. Two diagonals are called \emph{crossing} if they intersect in the interior of $\mcP_{n+3}$. A \emph{triangulation} $T$ of $\mcP_{n+3}$ is a maximal set of non-crossing
diagonals together with all the boundary edges. Hence, there are $n$ diagonals and $n+3$ boundary edges in a triangulation $T$. The boundary edges of $\mcP_{n+3}$ are denoted by $T_{n+1},\ldots,T_{2n+3}$.

According to \cite{FZ03}, in a cluster algebra of type $A_n$, the clusters are in bijection with triangulations of $\mcP_{n+3}$. For a
triangulation $T=\{T_1,\ldots,T_n,T_{n+1},\ldots,T_{2n+3}\}$, let
$\mfx=\mfx_{T}=\{x_1,\ldots,x_n\}$ be the corresponding initial cluster, where
we denote $x_i=x_{T_i} $. Moreover, take the semifield $\mbP=\text{Trop}(x_{n+1},\dots,x_{2n+3})$. Then, for any $1\leq k\leq n$, the $k$-direction mutation exchange relation is given by:
\begin{align}
	x_kx_{k^{\prime}}=x_ax_c+x_bx_d,
\end{align}
where $T_a,T_c$ are two
opposite edges of the quadrilateral and  $T_b,T_d$ are the other two
opposite edges. The mutation can also be described as a flip of diagonals in the quadrilateral and the relation is usually called \emph{ptolemy relation}, see \Cref{A flip}. We also associate an $n\times n$ matrix $B=B(T)=(b_{ij}(T))_{n\times n}$ with the triangulation $T$ as follows:
\begin{align}
	b_{ij}(T)=\left\{
		\begin{array}{ll}
			1, &   \text{If $T_i$ and $T_j$ belong to a same triangle,}\\ &\text{and the rotation from $T_i$  to $T_j$ is counter-clockwise}; \\
			-1, &   \text{If $T_i$ and $T_j$ belong to a same triangle,}\\ &\text{and the rotation from $T_i$  to $T_j$ is clockwise};\\
			0, & \text{If $T_i$ and $T_j$ do not belong to a same triangle.}
		\end{array} \right.
\end{align}
We can also define the element $b_{ij}$ for $n+1\leq i\leq 2n+3$ and $1\leq j\leq n$ as above. Then, the corresponding coefficient $2n$-tuple $\mfp=(p_1^+,p_1^-,\dots,p_n^+,p_n^-)$ is given by
\begin{align}p_i^+=\prod_{j\,\geq\, n+1 \,: \, b_{ji}=1 } x_j \qquad \textup{and} \qquad  
p_i^-=\prod_{j\,\geq\, n+1 \,: \, b_{ji}=-1} x_j.\end{align}

\begin{example}
	The zigzag triangulation of $6$-gon in \Cref{6-gon} corresponds to the initial seed $(\mfx,\mfp,B)$ of type $A_3$, where   
	\begin{align}
		\mfx=(x_1,x_2,x_3),\ B=\begin{pmatrix}0 & -1 & 0 \\ 1 & 0 & 1 \\ 0 & -1 & 0\end{pmatrix},
	\end{align}
	and $\mfp=(p_1^+,p_1^-,p_2^+,p_2^-,p_3^+,p_3^-)$ is as follows: \begin{align}
		(p_1^+,p_1^-)=(x_4,x_5x_9),\ (p_2^+,p_2^-)=(x_6x_9,1),\ (p_3^+,p_3^-)=(x_7,x_6x_8).
	\end{align}
Note that $p_i^+ \oplus p_i^-=1$ for any $i\in \{1,2,3\}$. Then by \Cref{equiv coeff}, the initial coefficient variables are \begin{align}\mfy=(y_1,y_2,y_3)=(x_4x_5^{-1}x_9^{-1},x_6x_9,x_7x_6^{-1}x_8^{-1}).\end{align}
\end{example}
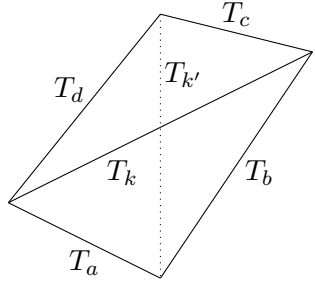
\begin{figure}
	\centering
	\begin{tikzpicture}
		\draw (0,0)--(2,-1);
		\draw (2,-1)--(4,2);
		\draw (0,0)--(2,2.5);
		\draw (2,2.5)--(4,2);
		\draw [dotted] (2,2.5)--(2,-1);
		\draw (0,0)--(4,2);
		\draw (1,-0.5) node[anchor=north]{$T_a$};
		\draw (3,2.8) node[anchor=north]{$T_c$};
		\draw (0.8,1.8) node[anchor=north]{$T_d$};
		\draw (3.3,0.7) node[anchor=north]{$T_b$};
		\draw (1.5,0.7) node[anchor=north]{$T_k$};
		\draw (2.3,2) node[anchor=north]{$T_{k^{\prime}}$};
	\end{tikzpicture}
	\caption{A flip in a quadrilateral.}	
\label{A flip}
\end{figure}

\subsection{$T$-paths}\ 

Let $T=\{T_1,\ldots,T_n,T_{n+1},\ldots,T_{2n+3}\}$ be an arbitrary
 triangulation of the convex $(n+3)$-polygon $\mcP_{n+3}$, where $T_1,\ldots,T_n$ are
 diagonals and $T_{n+1},\ldots,T_{2n+3}$ are boundaries. 
 Let $a$ and $b$ be two
non-adjacent vertices on the boundary and $X_{a,b}$ be the
diagonal between them.
\begin{definition}[\emph{$T$-paths}]\label{Tpath}
A \emph{$T$-path} $\mcP$ from $a$ to $b$ is  a sequence 
\[ \mcP = (v_0,v_1,\ldots,v_{\ell(\mcP)}\mid
i_1,i_2,\ldots,i_{\ell(\mcP)})\]
such that 
\begin{itemize}[leftmargin=3em]
\item[\textup{(T1)}]  $a=v_0,v_1,\dots,v_{\ell(\mcP)}=b$ are vertices of $\mcP$.
\item[\textup{(T2)}] $T_{i_k}$ connects the vertices $v_{{k-1}}$ and $v_{k}$
  for each $k=1,2,\dots,\ell(\mcP)$, where $T_{i_k}\in T$.
\item[\textup{(T3)}] If $j\ne k$, then $i_j\ne i_k$.
\item[\textup{(T4)}] $\ell(\mcP)$ is odd.
\item[\textup{(T5)}] If $k$ is even, then $T_{i_{k}}$ crosses $X_{a,b}$.
\item[\textup{(T6)}] If $j<k$ and both $T_{i_j}$ and $T_{i_{k}}$ cross
  $X_{a,b}$ then the crossing point of $T_{i_j}$  
and $X_{a,b}$ is closer to the vertex $ a $   
than the crossing point of $T_{i_{k}}$ and $X_{a,b}$.
\end{itemize}    
\end{definition}
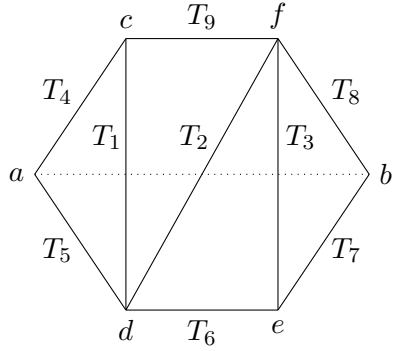
\begin{figure}
	\centering
	\begin{tikzpicture}
	\draw (0,0)--(2,0);
	\draw (-1.2,1.8)--(0,0);
	\draw (2,0)--(3.2,1.8);
	\draw (-1.2,1.8)--(0,3.6);
	\draw (2,3.6)--(3.2,1.8);
	\draw (0,3.6)--(2,3.6);
	\draw (0,3.6)--(0,0);
	\draw (2,3.6)--(2,0);
	\draw (0,0)--(2,3.6);
	\draw [ dotted] (-1.2,1.8)--(3.2,1.8);
	\draw (-1.2,1.8) node[anchor=east]{$a$};
	\draw (3.2,1.8) node[anchor=west]{$b$};
	\draw (0,3.6) node[anchor=south]{$c$};
	\draw (0,0) node[anchor=north]{$d$};
	\draw (2,3.6) node[anchor=south]{$f$};
	\draw (2,0) node[anchor=north]{$e$};
	\draw (-0.25,2) node[anchor=south]{$T_1$};
	\draw (0.9,2) node[anchor=south]{$T_2$};
	\draw (2.3,2) node[anchor=south]{$T_3$};
	\draw (-0.9,2.6) node[anchor=south]{$T_4$};
	\draw (-0.9,0.5) node[anchor=south]{$T_5$};
	\draw (2.9,0.5) node[anchor=south]{$T_7$};
	\draw (2.9,2.6) node[anchor=south]{$T_8$};
	\draw (1,3.6) node[anchor=south]{$T_9$};
	\draw (1,0) node[anchor=north]{$T_6$};
	\end{tikzpicture}
\caption{A triangulation of $6$-gon (type $A_3$).}	
\label{6-gon}
\end{figure}
Let $\mcP_T(a,b)$ be the
set of all $T$-paths from $a$ to $b$. In the following, in any $T$-path $\mcP$ from $a$ to $b$, we classify the edges $T_{i_k}$ as follows:
\begin{itemize}[leftmargin=2em]
\itemsep=0pt
	\item $o$: The diagonals which are passed through at odd steps.
	\item $e$: The diagonals which are passed through at even  steps.
	\item $\widetilde{o}$: The boundaries which are passed through at odd steps.
\end{itemize}
For example, we can see these notations in \Cref{Two cases}. For any $T$-path $\mcP$ from $a$ to $b$, 
we associate a Laurent monomial $x(\mcP)$ in the cluster algebra $\mcA$ as follows:
\begin{align}\label{T-express}
x(\mcP)=
{\prod_{k \textup{ odd}} x_{i_{k}}}
\
{\prod_{k \textup{ even}}x_{i_{k}}^{-1}}. 
\end{align}
Moreover, each cluster variable can be expressed by the sum of all such Laurent monomials $x(\mcP)$.
\begin{theorem}[{\cite[Theorem 1.2]{Sch08}}]
	Let $a$ and $b$ be two non-adjacent vertices of
  $\mcP_{n+3}$, $X_{a,b}$ be the diagonal between them and
  $x_{a,b}$ be the corresponding cluster variable. Then
\begin{align}
	x_{a,b}=\sum_{\mcP\in\mcP_T(a,b)} x(\mcP). \label{expression}
\end{align}
\end{theorem}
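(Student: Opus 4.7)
The plan is to argue by induction on the crossing number $d := d(X_{a,b}, T)$, defined as the number of diagonals of $T$ that intersect $X_{a,b}$ in the interior of $\mcP_{n+3}$. In the base case $d = 0$, the chord $X_{a,b}$ itself coincides with some edge $T_i$ of $T$, so condition (T4) forces the length $\ell(\mcP) = 1$ and the unique $T$-path is $(a, b \mid i)$; by \eqref{T-express} its weight is $x_i = x_{a,b}$, matching the formula.

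For the inductive step, let $\Delta \in T$ be the triangle containing the vertex $a$ and the initial portion of $X_{a,b}$, and let $T_k$ be the unique side of $\Delta$ that $X_{a,b}$ crosses. Flipping $T_k$ yields a new triangulation $T' := \mu_k(T)$ containing a new diagonal $T_{k'}$ in place of $T_k$, and the choice of $T_k$ adjacent to $a$ guarantees $d(X_{a,b}, T') = d - 1$. Denote by $T_\alpha, T_\beta, T_\gamma, T_\delta$ the four cyclically ordered sides of the quadrilateral $Q$ that is flipped, with $T_\alpha, T_\gamma$ opposite and $T_\beta, T_\delta$ opposite. After absorbing the tropical coefficients into the boundary variables $x_{n+1}, \ldots, x_{2n+3}$, the exchange relation \eqref{x-var} becomes the Ptolemy identity
$$x_k \, x_{k'} \;=\; x_\alpha x_\gamma + x_\beta x_\delta.$$
The inductive hypothesis applied to $T'$ then gives
$$x_{a,b} \;=\; \sum_{\mcP' \in \mcP_{T'}(a,b)} x(\mcP').$$

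The final and most delicate step is to rewrite this expansion as $\sum_{\mcP \in \mcP_T(a,b)} x(\mcP)$ via an explicit bijection. Split $\mcP_{T'}(a,b)$ into the $T'$-paths that do not traverse $T_{k'}$ and those that do. Paths of the first kind are automatically $T$-paths, since $T$ and $T'$ differ only in the single diagonal $T_k \leftrightarrow T_{k'}$. For a $T'$-path of the second kind, $T_{k'}$ appears at an odd step (by (T5), since $T_{k'}$ does not cross $X_{a,b}$ in $T'$), and applying the Ptolemy identity to that step replaces the single edge $T_{k'}$ by one of the two length-three sub-paths $(T_\alpha, T_k, T_\gamma)$ or $(T_\beta, T_k, T_\delta)$, each now containing $T_k$ at the required even step. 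The main obstacle is the combinatorial verification that every sequence produced this way genuinely satisfies all of (T1)--(T6) for the triangulation $T$—in particular the non-repetition (T3), the parity (T4), and the monotone crossing condition (T6) along $X_{a,b}$—and conversely, that every $T$-path arises from exactly one such construction. This demands a careful case analysis on how $X_{a,b}$ enters and exits the quadrilateral $Q$, but once established it delivers a weight-preserving bijection and completes the induction.
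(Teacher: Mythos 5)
The paper itself gives no proof of this statement—it is imported verbatim from \cite{Sch08}—so your proposal has to stand on its own. Your framework is reasonable and partially correct: the base case is right (if $d=0$ then $X_{a,b}\in T$ by maximality and $(\text{T4})$--$(\text{T5})$ force $\ell(\mcP)=1$); flipping the first crossed diagonal $T_k$ does give $d(X_{a,b},T')=d-1$ because the new diagonal $T_{k'}$ shares the endpoint $a$ with $X_{a,b}$; and since $T$-paths avoiding $T_k$ are exactly $T'$-paths avoiding $T_{k'}$, the induction correctly reduces to the identity $\sum_{\mcP\ni T_k}x(\mcP)=\sum_{\mcP'\ni T_{k'}}x(\mcP')$.

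The gap is that the ``resolution'' correspondence you propose for this identity is not merely unverified—it is false as stated, and the paper's own running example refutes it. Take the triangulation of \Cref{6-gon} with $X_{a,b}$ the long horizontal diagonal: the first crossed diagonal is $T_1=cd$, the flip quadrilateral is $acfd$, and $T_{k'}=af$ with Ptolemy relation $x_1x_{af}=x_4x_2+x_5x_9$. The $T'$-path $(a,f,d,e,f,b\mid k',2,6,3,8)$ uses $T_{k'}$ at step $1$; one of its two resolutions is $(a,d,c,f,d,e,f,b\mid 5,1,9,2,6,3,8)$, a genuine $T$-path (the last entry of \Cref{table1}), but the other, $(a,c,d,f,d,e,f,b\mid 4,1,2,2,6,3,8)$, repeats the edge $T_2$ and violates $(\text{T3})$. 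Conversely, the genuine $T$-path $(a,c,d,e,f,b\mid 4,1,6,3,8)$ uses $T_1$ at step $2$ but its third edge $T_6=de$ is not a side of the quadrilateral, so it is produced by no resolution. The sums still agree—the illegal word and the unmatched $T$-path both have weight $1/(x_1x_3)$, and one passes between them by cancelling the backtracking $d\to f\to d$—but supplying and proving this corrected correspondence (handling repeated edges, backtracking contractions, and $T$-paths that exit the quadrilateral immediately after $T_k$) is exactly the combinatorial heart of the theorem, and it is the part your argument omits. As written, the induction does not close.
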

\begin{remark}\label{coe rmk} The formula \eqref{expression} also holds for the coefficient-free
  cluster algebras of type $A_n$  when setting
$x_i=1$, where $i=n+1, \dots,2n+3$.
\end{remark} 
\begin{example}\label{T-example} 
All the $T$-paths from $a$ to $b$ in \Cref{6-gon} are as follows, see \Cref{table1}. Hence, the coefficient-free cluster variable corresponding to the diagonal $ab$ in a cluster algebra of type $A_3$ is 
\begin{align}
	x_{a,b}=\dfrac{x_2^2+2x_2+1+x_1x_3}{x_1x_2x_3}.
\end{align}
It is clear that $x_{a,b}$ is log-concave with respect to each $x_i$.
\begin{table}[ht]
\centering
\scalebox{0.9}{
\begin{tabular}{|c|c|c|}
\hline 
&\\[-3mm]
$T$-paths $\mcP$ from $a$ to $b$ & Corresponding coefficient-free Laurent monomials $x(\mcP)$ \\ [1mm]
\hline
&\\[-3mm]
$(a,d,f,b\mid 5,2,8)$ & $\dfrac{1}{x_2}$\\ [3mm]
\hline 
&\\[-3mm]
$(a,c,d,e,f,b\mid 4,1,6,3,8)$ & $\dfrac{1}{x_1x_3}$\\ [3mm]
\hline 
&\\[-3mm]
$(a,d,c,f,e,b\mid 5,1,9,3,7)$ & $\dfrac{1}{x_1x_3}$\\ [3mm]
\hline 
&\\[-3mm]
$(a,c,d,f,e,b\mid 4,1,2,3,7)$ & $\dfrac{x_2}{x_1x_3}$\\[3mm]
\hline
&\\[-3mm] 
$(a,d,c,f,d,e,f,b\mid 5,1,9,2,6,3,8)$ & $\dfrac{1}{x_1x_2x_3}$\\ [3mm]
\hline
\end{tabular}}
\hspace{3cm}
\caption{$T$-paths from $a$ to $b$ in a cluster algebra of type $A_3$.}\label{table1}
\end{table}
\end{example}

\begin{lemma}[{\cite[Corollary 1.7]{Sch08}}] \label{01}
	Let $\mcA$ be a cluster algebra of type $A_n$ with coefficients $\mbP=\text{Trop}(x_{n+1},\dots,x_{2n+3})$ and $\mfx=\{x_1,\ldots,x_n\}$ be the initial cluster. Let
\begin{align}x_{i;t}=\dfrac{N_{i;t}(x_1,\dots, x_n)}{x_1^{d_{1i;t}}\cdots x_n^{d_{ni;t}}}=\frac{f_{i;t}(x_1,\ldots,x_n,x_{n+1},\ldots,x_{2n+3})}{x_1^{d_{1i;t}}\cdots x_n^{d_{ni;t}}}\end{align} be the Laurent
expression of $x_{i;t}$, where $f_{i;t}$ is a
polynomial which is not divisible by any of the $x_1,\ldots,x_n$.
Then, the coefficients of $f_{i;t}$ are  either $0$ or $1$.
\end{lemma}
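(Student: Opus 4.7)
My plan is to deduce the statement from the $T$-path expansion formula \eqref{expression} by showing that the assignment $\mcP \mapsto x(\mcP)$ from $\mcP_T(a,b)$ to the set of Laurent monomials in $x_1, \ldots, x_{2n+3}$ is injective. Granting this injectivity, each Laurent monomial in $x_{a,b} = \sum_{\mcP \in \mcP_T(a,b)} x(\mcP)$ appears with coefficient $0$ or $1$; since the coefficients of $f_{i;t}$ are precisely those of $x_1^{d_{1i;t}} \cdots x_n^{d_{ni;t}} \cdot x_{a,b}$, the conclusion follows.

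For injectivity, suppose $\mcP, \mcP' \in \mcP_T(a,b)$ satisfy $x(\mcP) = x(\mcP')$. By \eqref{T-express} together with axiom (T3), every $T$-path uses each edge at most once, so $x(\mcP)$ determines disjoint subsets $O \subset T$ (the odd-step edges, carrying positive exponents) and $E \subset T$ (the even-step edges, carrying negative exponents). Thus $\mcP$ and $\mcP'$ share the same $O$ and the same $E$. Axiom (T5) forces every element of $E$ to be a diagonal of $T$ crossing $X_{a,b}$, and axiom (T6) totally orders the even-step edges by the position of their intersection with $X_{a,b}$ along the path from $a$ to $b$. Hence the ordered sequence $T_{i_2}, T_{i_4}, \ldots, T_{i_{2m}}$ of even-step edges is identical for $\mcP$ and $\mcP'$.

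This ordered sequence determines the strip of triangles $\Delta_0, \Delta_1, \ldots, \Delta_m$ swept by $X_{a,b}$, where $\Delta_{j-1}$ and $\Delta_j$ share the side $T_{i_{2j}}$. For each intermediate $1 \le j \le m-1$, the odd-step edge $T_{i_{2j+1}}$ must connect an endpoint of $T_{i_{2j}}$ to an endpoint of $T_{i_{2j+2}}$; since these two crossings are two sides of the triangle $\Delta_j$ meeting at a common apex, and since (T3) forbids $T_{i_{2j+1}}$ from coinciding with either $T_{i_{2j}}$ or $T_{i_{2j+2}}$, the edge $T_{i_{2j+1}}$ is forced to be the unique third side of $\Delta_j$, determined by the strip alone. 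Moreover, for $1 \le j \le m$ the vertices $v_{2j-1}, v_{2j}$ are endpoints of diagonals crossing $X_{a,b}$, so they lie strictly in the two open half-planes cut off by $X_{a,b}$; in particular they are neither $a$ nor $b$. Therefore $T_{i_1}$ is the unique member of $O$ incident to $a$ and $T_{i_{2m+1}}$ is the unique member of $O$ incident to $b$, both read off directly from $x(\mcP)$. This reconstructs the full ordered edge sequence of $\mcP$ and hence the path itself, giving $\mcP = \mcP'$.

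The main obstacle is the combinatorial-geometric step: verifying that each intermediate odd-step edge is forced to be the third side of its enclosing triangle, which relies on a short case analysis on how two consecutive diagonals crossing $X_{a,b}$ share a vertex in the triangulation, combined with the nonrepetition axiom (T3). A secondary subtlety, essential for handling $T_{i_1}$ and $T_{i_{2m+1}}$, is to check that $a$ and $b$ can only occupy the extreme vertices $v_0$ and $v_{2m+1}$ of the walk, so that the endpoint-incidence criterion unambiguously picks out these two odd-step edges inside $O$.
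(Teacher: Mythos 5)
The paper does not prove this lemma; it is imported verbatim from \cite[Corollary 1.7]{Sch08}, so your argument must stand on its own. Your overall strategy---showing that $\mcP\mapsto x(\mcP)$ is injective on $\mcP_T(a,b)$ once the boundary variables $x_{n+1},\dots,x_{2n+3}$ are kept, and deducing the $0/1$ coefficients from \eqref{expression}---is the right one, and is indeed what underlies Schiffler's corollary. The reduction in your first paragraph is fine, as is the identification of $T_{i_1}$ and $T_{i_{2m+1}}$ as the unique members of $O$ incident to $a$ and to $b$.

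The gap is in the reconstruction of the intermediate odd edges. You assert that consecutive even-step edges $T_{i_{2j}}$ and $T_{i_{2j+2}}$ are two sides of a common triangle $\Delta_j$, so that $T_{i_{2j+1}}$ is forced to be its third side, ``determined by the strip alone.'' This is false: axioms (T1)--(T6) do not force the even steps to exhaust the diagonals of $T$ crossing $X_{a,b}$, so two consecutive even-step edges may be separated by several triangles of the strip and need not share a vertex. The paper's own \Cref{table1} (for the triangulation in \Cref{6-gon}) is a counterexample: the $T$-paths $(a,c,d,e,f,b\mid 4,1,6,3,8)$ and $(a,d,c,f,e,b\mid 5,1,9,3,7)$ have the same even-edge set $\{T_1,T_3\}$ (the crossing diagonal $T_2$ is skipped), the edges $T_1=cd$ and $T_3=ef$ share no vertex, and the intermediate odd edge is $T_6=de$ for the first path but $T_9=cf$ for the second. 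Hence the intermediate odd edge is not determined by the even-step sequence, and your argument never brings the set $O$ (which does differ between these two paths, saving injectivity) to bear on it. What is missing is a proof that the pair $(O,E)$ determines the assignment of elements of $O$ to the odd positions---for instance an induction along the ordered crossing sequence showing that each $v_{2j}$, and hence the edge of $O$ leaving it, is determined step by step, including the fan configurations in which several consecutive even-step diagonals share an endpoint. As written, the proposal does not establish the injectivity it relies on.
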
 
\begin{proposition}\label{coeff 012}
	Let $\mcA$ be a coefficient-free cluster algebra of type $A_n$. Then, the coefficients of $N_{i;t}(x_1,\dots, x_n)$ are $0$, $1$ or $2$.
	\end{proposition}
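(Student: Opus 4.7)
The plan is to deduce the proposition from \Cref{01} combined with Schiffler's $T$-path expansion \eqref{expression}. Under geometric coefficients $\mbP=\text{Trop}(x_{n+1},\dots,x_{2n+3})$, \Cref{01} states that every monomial of the numerator $f_{i;t}(x_1,\dots,x_{2n+3})$ has coefficient $0$ or $1$. Passing to the coefficient-free cluster algebra amounts to specializing $x_{n+1}=\cdots=x_{2n+3}=1$, so that $N_{i;t}(x_1,\dots,x_n)=f_{i;t}(x_1,\dots,x_n,1,\dots,1)$, and the coefficient of a monomial $M$ in $N_{i;t}$ equals the number of monomials of $f_{i;t}$ that collapse onto $M$ under this specialization. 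By \eqref{expression} this number equals the number of distinct $T$-paths $\mcP$ from $a$ to $b$ whose \emph{diagonal data} --- the multisets of triangulation diagonals appearing at odd and at even positions of $\mcP$ --- agree. The statement is therefore equivalent to the combinatorial claim that, for any fixed diagonal data, at most two $T$-paths from $a$ to $b$ realize it.

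I would establish this claim through a top--bottom symmetry with respect to $X_{a,b}$. For a $T$-path $v_0=a,v_1,\dots,v_{2r+1}=b$, every intermediate vertex $v_1,\dots,v_{2r}$ lies strictly above or strictly below $X_{a,b}$. By T5 every even-position edge is a crossing diagonal and so connects vertices on opposite sides; a boundary edge at an odd position connects vertices on the same side; and a diagonal at an odd position connects vertices on opposite or on the same side depending on whether it crosses $X_{a,b}$, a property which is already recorded in the diagonal data. Consequently, once the diagonal data is fixed, the sequence of sides of $v_1,\dots,v_{2r}$ is determined by the single binary choice of which side $v_1$ lies on. Given that choice, the landing and takeoff endpoints at each diagonal-occupied position are forced, so only the identity of the boundary edge at each odd position whose diagonal slot is empty remains. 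But two polygon vertices strictly on the same side of $X_{a,b}$ are joined by at most one edge of the polygon, namely the unique cyclic-adjacency edge (if any). Hence at most two $T$-paths share any given diagonal data, which establishes the claim.

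The main obstacles are the following two points. First, one must carefully accommodate odd-position non-crossing diagonals, which can genuinely occur in $T$-paths (even in a hexagon, a non-crossing triangulation diagonal sharing a vertex with $b$ can appear at an odd position of a valid $T$-path); the `preserve side' clause in the argument above is what absorbs such edges. Second, one must explain why, in many cases, only one of the two side-choices actually produces a valid $T$-path, so that the coefficient is $1$ rather than $2$: for the mirror choice to close up into a $T$-path, each diagonal demanded by the pattern must have an endpoint on the correct side of $X_{a,b}$, and each required boundary edge must exist in the polygon. The example in \Cref{T-example} is illustrative: the numerator term $x_2$ of $x_{a,b}$ receives coefficient $2$ from the mirror pair $(a,c,d,e,f,b\mid 4,1,6,3,8)$ and $(a,d,c,f,e,b\mid 5,1,9,3,7)$, which share the diagonal data $\{T_1,T_3\}$ at even positions and none at odd positions, while every other numerator term arises from a unique $T$-path whose mirror simply fails to close up into a valid $T$-path.
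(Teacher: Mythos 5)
Your proposal is correct and follows essentially the same route as the paper: both reduce the statement to \Cref{01} together with the combinatorial claim that at most two $T$-paths contribute a given coefficient-free Laurent monomial in \eqref{expression}, the two arising exactly as a mirror pair determined by which side of $X_{a,b}$ the path first visits (these are the paper's two ``symmetric'' cases). Your side-sequence argument is in fact a more systematic justification of that claim than the paper's figure-based assertion, though to make it fully airtight you should note explicitly that (T6) is what forces the position of each odd-step \emph{crossing} diagonal into a unique gap between consecutive even-step diagonals, so that the side-flip pattern, and hence every intermediate vertex, really is pinned down by the single binary choice.
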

\begin{proof}
By \Cref{01}, it is direct  that the coefficients of $N_{i;t}$ can be $0$ and $1$. Particularly for the coefficient-free cluster algebras of type $A_n$, by \Cref{coe rmk} and the geometric realization, the boundaries in a $T$-path provide $1$ in \eqref{T-express}.  Hence, by the rules $(\text{T3})-(\text{T6})$ and the formula \eqref{T-express}, there are only two possible cases that two symmetric $T$-paths $\mcP_1$ and $\mcP_2$ passing by the same vertices generate the same element $x(\mcP_1)=x(\mcP_2)$, see \Cref{Two cases}. Then, the corresponding coefficient of $x(\mcP_1)$ in $N_{i;t}$ is $2$. However, note that the symmetric $T$-paths may not exist.  
	Therefore, by the cluster expansion formula \eqref{expression}, we obtain that the coefficients of $N_{i;t}$ are $0,1$ or $2$.
\end{proof}

\begin{remark}\label{prop-rmk}
Note that the \Cref{coeff 012} implies that if a $T$-path $\mcP$ from $a$ to $b$ passes through two  diagonals (not boundaries) consecutively, then $\mcP$ is the unique $T$-path from $a$ to $b$ providing the monomial $x(\mcP)$.
\end{remark}
\begin{figure}
\centering
\begin{tikzpicture}[scale=0.75]
\draw (-1,0) node[anchor=east]{Case $1$:};
\draw (0,0) node[anchor=east]{$a$};
\draw (7,0) node[anchor=west]{$b$};
\draw (1,1) node[anchor=south]{$A_1$};
\draw (1,-1) node[anchor=north]{$A_2$};

\draw (6.3,-1) node[anchor=north]{$A_{l-1}$};
\draw (6,0.9) node[anchor=south]{$A_l$};

\filldraw (0,0) circle (0.06);
\filldraw (7,0) circle (0.06);
	\draw (0,0)--(7,0);
	\draw[red, ->] (0,0)--(1,1);
	\draw[red,->] (1,1)--(1,-1);
	\draw[red,->] (1,-1)--(2,-1.1);
	\draw[red,->] (2,-1.1)--(2,1.1);
	\draw[red,->] (2,1.1)--(3,1.2);
	\draw[red,->] (3,1.2)--(3,-1.2);
	\draw[red,->] (4,-1.2)--(4,1.2);
	\draw[red,->] (4,1.2)--(5,1.1);
	\draw[red,->] (5,1.1)--(5,-1.1);
	\draw[red,->] (5,-1.1)--(6,-1);
	\draw[red,->] (6,-1)--(6,1);
	\draw[red,->] (6,1)--(7,0);
\draw (0,0.6) node[anchor= west] {$\textcolor{red}{\tilde{o}}$};
\draw (0.5,-0.3) node[anchor= west] {$\textcolor{red}{e}$};
\draw (1.2,-1.3) node[anchor= west] {$\textcolor{red}{\tilde{o}}$};
\draw (1.9,-0.3) node[anchor= west] {$\textcolor{red}{e}$};
\draw (2.2,1.4) node[anchor= west] {$\textcolor{red}{\tilde{o}}$};
\draw (2.9,-0.3) node[anchor= west] {$\textcolor{red}{e}$};
\draw (3.9,-0.3) node[anchor= west] {$\textcolor{red}{e}$};
\draw (4.9,-0.3) node[anchor= west] {$\textcolor{red}{e}$};
\draw (5.9,-0.3) node[anchor= west] {$\textcolor{red}{e}$};
\draw (3.1,-1.1) node[anchor= west] {$\textcolor{red}{\dots}$};
\draw (4.2,1.4) node[anchor= west] {$\textcolor{red}{\tilde{o}}$};
\draw (5.2,-1.3) node[anchor= west] {$\textcolor{red}{\tilde{o}}$};
\draw (6.4,0.7) node[anchor= west] {$\textcolor{red}{\tilde{o}}$};
	
\draw (9,1) node[anchor=south]{$A_1$};
\draw (9,-1) node[anchor=north]{$A_2$};
\draw (14,-1) node[anchor=north]{$A_{l-1}$};
\draw (14,0.9) node[anchor=south]{$A_l$};
\draw (8,0) node[anchor=east]{$a$};
\draw (15,0) node[anchor=west]{$b$};
\filldraw (8,0) circle (0.06);
\filldraw (15,0) circle (0.06);
	\draw (8,0)--(15,0);
	\draw[red, ->] (8,0)--(9,-1);
	\draw[red, ->] (9,-1)--(9,1);
	\draw[red, ->] (9,1)--(10,1.1);
	\draw[red, ->] (10,1.1)--(10,-1.1);
	\draw[red, ->] (10,-1.1)--(11,-1.2);
	\draw[red, ->] (11,-1.2)--(11,1.2);
	\draw[red, ->] (12,1.2)--(12,-1.2);
	\draw[red, ->] (12,-1.2)--(13,-1.1);
	\draw[red, ->] (13,-1.1)--(13,1.1);
	\draw[red, ->] (13,1.1)--(14,1);
	\draw[red, ->] (14,1)--(14,-1);
	\draw[red, ->] (14,-1)--(15,0);
\draw (8,-0.6) node[anchor= west] {$\textcolor{red}{\tilde{o}}$};
\draw (8.5,0.3) node[anchor= west] {$\textcolor{red}{e}$};
\draw (9.2,1.3) node[anchor= west] {$\textcolor{red}{\tilde{o}}$};	
\draw (9.9,0.3) node[anchor= west] {$\textcolor{red}{e}$};
\draw (10.2,-1.4) node[anchor= west] {$\textcolor{red}{\tilde{o}}$};
\draw (10.9,0.3) node[anchor= west] {$\textcolor{red}{e}$};
\draw (11.1,1.1) node[anchor= west] {$\textcolor{red}{\dots}$};
\draw (12.2,-1.4) node[anchor= west] {$\textcolor{red}{\tilde{o}}$};
\draw (11.9,0.3) node[anchor= west] {$\textcolor{red}{e}$};
\draw (12.9,0.3) node[anchor= west] {$\textcolor{red}{e}$};
\draw (13.9,0.3) node[anchor= west] {$\textcolor{red}{e}$};
\draw (13.2,1.3) node[anchor= west] {$\textcolor{red}{\tilde{o}}$};
\draw (14.4,-0.6) node[anchor= west] {$\textcolor{red}{\tilde{o}}$};

\draw (-1,-4) node[anchor=east]{Case $2$:};
\draw (0,-4) node[anchor=east]{$a$};
\draw (6.2,-4) node[anchor=west]{$b$};
\draw (1,-3) node[anchor=south]{$B_1$};
\draw (1,-5) node[anchor=north]{$B_2$};
\draw (5.3,-3) node[anchor=south]{$B_{l-1}$};
\draw (5.1,-5) node[anchor=north]{$B_l$};
\filldraw (0,-4) circle (0.06);
\filldraw (6.2,-4) circle (0.06);
	\draw (0,-4)--(6.2,-4);
	\draw[blue, ->] (0,-4)--(1,-3);
	\draw[blue, ->] (1,-3)--(1,-5);
	\draw[blue, ->] (1,-5)--(2,-5.1);
	\draw[blue, ->] (2,-5.1)--(2,-2.9);
	\draw[blue, ->] (2,-2.9)--(3,-2.8);
	\draw[blue, ->] (3,-2.8)--(3,-5.2);
	\draw[blue, ->] (4,-5.2)--(4,-2.8);
	\draw[blue, ->] (4,-2.8)--(5,-2.9);
	\draw[blue, ->] (5,-2.9)--(5,-5.1);
	\draw[blue, ->] (5,-5.1)--(6.2,-4);
\draw (0,-3.4) node[anchor= west] {$\textcolor{blue}{\tilde{o}}$};
\draw (0.5,-4.4) node[anchor= west] {$\textcolor{blue}{e}$};
\draw (1.9,-4.4) node[anchor= west] {$\textcolor{blue}{e}$};
\draw (2.9,-4.4) node[anchor= west] {$\textcolor{blue}{e}$};
\draw (3.9,-4.4) node[anchor= west] {$\textcolor{blue}{e}$};
\draw (4.9,-4.4) node[anchor= west] {$\textcolor{blue}{e}$};
\draw (1.2,-5.3) node[anchor= west] {$\textcolor{blue}{\tilde{o}}$};
\draw (2.2,-2.6) node[anchor= west] {$\textcolor{blue}{\tilde{o}}$};
\draw (4.2,-2.6) node[anchor= west] {$\textcolor{blue}{\tilde{o}}$};
\draw (5.5,-4.7) node[anchor= west] {$\textcolor{blue}{\tilde{o}}$};
\draw (3.1,-5.1) node[anchor= west] {$\textcolor{blue}{\dots}$};

\draw (8.5,-4) node[anchor=east]{$a$};
\draw (14.7,-4) node[anchor=west]{$b$};
\draw (9.5,-3) node[anchor=south]{$B_1$};
\draw (9.5,-5) node[anchor=north]{$B_2$};
\draw (13.5,-3) node[anchor=south]{$B_{l-1}$};
\draw (13.6,-5) node[anchor=north]{$B_l$};
\filldraw (8.5,-4) circle (0.06);
\filldraw (14.7,-4) circle (0.06);
	\draw (8.5,-4)--(14.7,-4);
	\draw[blue, ->] (8.5,-4)--(9.5,-5);
	\draw[blue, ->] (9.5,-5)--(9.5,-3);
	\draw[blue, ->] (9.5,-3)--(9.5,-3);
	\draw[blue, ->] (9.5,-3)--(10.5,-2.9);
	\draw[blue, ->] (10.5,-2.9)--(10.5,-5.1);
	\draw[blue, ->] (10.5,-5.1)--(11.5,-5.2);
	\draw[blue, ->] (11.5,-5.2)--(11.5,-2.8);
	\draw[blue, ->] (12.5,-2.8)--(12.5,-5.2);
	\draw[blue, ->] (12.5,-5.2)--(13.5,-5.1);
	\draw[blue, ->] (13.5,-5.1)--(13.5,-2.9);
	\draw[blue, ->] (13.5,-2.9)--(14.7,-4);
\draw (8.5,-4.6) node[anchor= west] {$\textcolor{blue}{\tilde{o}}$};
\draw (9,-3.6) node[anchor= west] {$\textcolor{blue}{e}$};
\draw (10.4,-3.6) node[anchor= west] {$\textcolor{blue}{e}$};
\draw (9.7,-2.7) node[anchor= west] {$\textcolor{blue}{\tilde{o}}$};
\draw (11.4,-3.6) node[anchor= west] {$\textcolor{blue}{e}$};
\draw (10.7,-5.4) node[anchor= west] {$\textcolor{blue}{\tilde{o}}$};
\draw (12.4,-3.6) node[anchor= west] {$\textcolor{blue}{e}$};
\draw (12.7,-5.4) node[anchor= west] {$\textcolor{blue}{\tilde{o}}$};
\draw (13.4,-3.6) node[anchor= west] {$\textcolor{blue}{e}$};
\draw (14,-3.3) node[anchor= west] {$\textcolor{blue}{\tilde{o}}$};
\draw (11.55,-2.9) node[anchor= west] {$\textcolor{blue}{\dots}$};
\end{tikzpicture}
\caption{Two possible cases of two symmetric $T$-paths.}
\label{Two cases}
\end{figure}

\section{Log-concavity of cluster algebras of type $A_n$}
In this section, we define the log-concavity of coefficient-free cluster algebras. Then, we prove the log-concavity of cluster variables of type $A_n$ with $n\geq 2$.
\begin{definition}[\emph{Log-concavity of cluster algebras}]\label{log-concavity of cluster}
	For a coefficient-free cluster algebra $\mcA$, the cluster variable $x_{i;t}$ is called \emph{log-concave} if it is log-concave as a Laurent polynomial by \eqref{express of d-vector}. If all the cluster variables are log-concave, then the cluster algebra $\mcA$ is called \emph{log-concave}.
\end{definition}
Now, we give the main theorem to exhibit a novel phenomenon: log-concavity of the cluster variables of type $A_n$. Beforehand, the sketch of the proof is as follows. \begin{enumerate}[leftmargin=2em] 
\item Fix the exponents of every cluster variables of type $A_n$ based on Laurent phenomenon.
\item Use the $T$-paths and the intersection information in a given triangulation to characterize the cluster variables.
\item Determine the coefficients of the three successive cluster variables with respect to log-concavity.
\end{enumerate} 
\begin{theorem}\label{main1}
	All the cluster variables of type $A_n$ are log-concave, that is the coefficient-free cluster algebras of type $A_n$ are log-concave. 
\end{theorem}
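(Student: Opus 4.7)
The plan is to reduce log-concavity of the cluster variable $x_{i;t}$ to log-concavity of its numerator polynomial (via \Cref{same up}), and then to verify the latter directly from the $T$-path formula in \eqref{expression}. Fix a triangulation $T = \{T_1, \ldots, T_n, T_{n+1}, \ldots, T_{2n+3}\}$ of $\mcP_{n+3}$ and a diagonal $X_{a,b}$ corresponding to $x_{i;t}$, so that
\begin{align*}
N_{i;t}(x_1, \ldots, x_n) \;=\; \sum_{\mcP \in \mcP_T(a,b)} x_1^{d_{1i;t}} \cdots x_n^{d_{ni;t}} \cdot x(\mcP),
\end{align*}
where the monomial prefactor clears all negative exponents.

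First, I would analyse the exponent of each $x_k$ in a $T$-path monomial. By rule (T5) of \Cref{Tpath}, a diagonal $T_k$ not crossing $X_{a,b}$ can only appear at odd positions, so its $x_k$-exponent lies in $\{0,1\}$ and log-concavity in that variable is automatic. When $T_k$ crosses $X_{a,b}$ (equivalently $d_{ki;t}=1$ in type $A_n$), the $x_k$-exponent equals $2$, $1$, or $0$ according as $T_k$ appears at an odd step, does not appear, or appears at an even step in $\mcP$.

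Second, I would reduce log-concavity to a local counting statement. Fixing a crossing diagonal $T_k$ and all other exponents, denote by $a_0, a_1, a_2$ the coefficients in $N_{i;t}$ of $x_k^0, x_k^1, x_k^2$. By \Cref{coeff 012} each $a_e \in \{0,1,2\}$, so the only obstructions to $a_1^2 \geq a_0 a_2$ are the patterns $(1,0,1)$, $(2,0,1)$, $(1,0,2)$, $(2,0,2)$, $(2,1,1)$, $(1,1,2)$, and $(2,1,2)$. I would rule these out by a swap argument: given $T$-paths $\mcP_0$ and $\mcP_2$ contributing to $a_0$ and $a_2$ respectively whose $x_j$-exponents agree for all $j \neq k$, I would use the two triangles of $T$ sharing $T_k$ to re-route $\mcP_0$ (or $\mcP_2$) through $T_k$'s complementary side, producing a $T$-path $\mcP_1$ contributing to $a_1$ with all other exponents unchanged. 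The symmetric-pair structure of Figure \ref{Two cases}, which is the sole source of coefficient $2$ by \Cref{coeff 012}, forces the doubled contributions to appear in $a_1$ rather than in $a_0$ or $a_2$ whenever the outer two are both nonzero.

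The main obstacle will be this swap construction: I must ensure the re-routed sequence still satisfies (T3)--(T6), uses only edges of $T$, and leaves every other $x_j$-exponent fixed. This requires a case analysis on the local triangles adjacent to $T_k$ and on whether $\mcP_0$ or $\mcP_2$ already contains the symmetric substructures of Figure \ref{Two cases}. I expect \Cref{prop-rmk} -- that a $T$-path traversing two consecutive diagonals produces a unique monomial -- to be decisive in controlling when coefficient $2$ can appear, guaranteeing that two $2$'s can never sit adjacent to each other along any variable direction, and hence the only surviving configuration is the $(1,2,1)$ pattern already observed in \Cref{T-example}.
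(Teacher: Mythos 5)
Your outline matches the paper's strategy---reduce to exponents in $\{-1,0,1\}$ per variable (equivalently $\{0,1,2\}$ in the numerator), invoke the coefficient bound of \Cref{coeff 012}, and settle the inequality by analysing pairs of $T$-paths whose monomials differ only in the $x_j$-exponent---but the decisive step, your ``swap argument,'' is exactly where the entire content of the paper's proof lives, and as you describe it it would not go through. A re-routing confined to ``the two triangles of $T$ sharing $T_k$'' cannot work: a $T$-path is constrained globally by (T3), (T4) and (T6) of \Cref{Tpath}, so a local detour around $T_k$ generically reuses an edge, shifts the parity of the steps at which later diagonals are crossed, or violates the crossing-order condition, and it typically changes the exponents of the neighbouring diagonals as well. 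What the paper actually does is first prove, by a case analysis on the vertex $R_{-2}$, that two paths contributing to the outer coefficients with all other exponents equal must traverse $T_j$ in \emph{opposite} orientations, and then that this forces a rigid global interleaving of the two vertex sequences ($R_{-3}=R_0=B_{-1}$, $R_{-2}=B_{-2}$ with $R_{-2}R_{-1}$ a boundary, $R_{-5}=B_{-3}$, $R_{-4}=B_{-4}$, and so on, propagating out to both endpoints as in Figure~\ref{R=B}). Only after this rigidity is established can one splice the prefix of one path onto the suffix of the other at the shared vertices to obtain two genuine $T$-paths contributing to the middle coefficient, whence $a_1=2$ by \Cref{coeff 012}.

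Two smaller points. First, your plan to exclude the seven bad patterns one by one, resting on the unproved claim that ``two $2$'s can never sit adjacent,'' is unnecessary: since every coefficient is at most $2$, it suffices to show that $a_0a_2\neq 0$ forces $a_1=2$, and then $a_1^2=4\geq a_0a_2$ holds for all admissible values of $a_0$ and $a_2$. Second, \Cref{prop-rmk} gives uniqueness only for a path that actually traverses two consecutive diagonals (in the forced configuration this is the path crossing $T_j$ at an odd step); it is not a general device for deciding in which slot a coefficient $2$ may occur, so the burden of your argument cannot be shifted onto that remark.
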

\begin{proof}  In the following, we take three steps to prove this theorem. Let $\mfx=(x_1,\dots,x_n)$ be the initial cluster and $T$ be the corresponding triangulation.\

\textbf{Step 1:} Note that by the cluster expansion formulas \eqref{T-express} and \eqref{expression} of type $A_n$, the degree of each $x_j$ in each Laurent monomial term of \eqref{express of d-vector} is $-1,0$ or $1$. Hence, to prove that each cluster variable is log-concave, it is sufficient to prove that for any $1\leq j\leq m$, the following inequality holds:
\begin{align}\label{inequality}
	a^2_{i_1,\dots,0,\dots,i_m}\geq a_{i_1,\dots,-1,\dots,i_m}a_{i_1,\dots,1,\dots,i_m},
\end{align}
where $-1,0,1$ are at the $j$-th position. Moreover, we only need to focus on the case that $a_{i_1,\dots,-1,\dots,i_m}a_{i_1,\dots,1,\dots,i_m}\neq 0$, which means there exists (at least one) $T$-paths $\mcP_1$ and $\mcP_2$ simultaneously such that 
\begin{align}
	x(\mcP_1)=x_1^{i_1}\dots x_j^{-1}\dots x_m^{i_m}, \ x(\mcP_2)=x_1^{i_1}\dots x_j\dots x_m^{i_m}. \label{two x(P)}
\end{align}
In the following figures, we use the red broken line with vertices $R_i$ to represent the $T$-path $\mcP_1$ passing through the diagonal $T_j$ at even step (corresponding to $x_j^{-1}$ in \eqref{two x(P)}) and the blue broken line with vertices $B_i$ to represent the $T$-path $\mcP_2$ passing through the diagonal $T_j$ at odd step (corresponding to $x_j$ in \eqref{two x(P)}).
\begin{figure}
\centering
\begin{tikzpicture}[scale=0.8]
	\draw [line width=2pt](0,0)--(11,3.3);
	\draw (0,0) node[anchor=east]{$a$};
	\draw (11,3.3) node[anchor=west]{$b$};
	\draw[blue][->] (6,0.5)--(7,4);
	\draw[blue][->] (4.5,3)--(6,0.5);
	\draw[red][->] (5.9,0.5)--(6.9,4);
	\filldraw (0,0) circle (0.06);
	\filldraw (11,3.3) circle (0.06);
	\filldraw (7,4.1) circle (0.06);
	\filldraw (5.97,0.43) circle (0.06);
	\filldraw (4.5,3) circle (0.06);
	\draw (7,4) node[anchor=south west] {$\textcolor{red}{R_0}$=$\textcolor{blue}{B_0}$};
	\draw (6,0.5) node[anchor=north ] {$\textcolor{red}{R_{-1}}$=$\textcolor{blue}{B_{-1}}$};
	\draw (3.8,3) node[anchor=south west] {$\textcolor{blue}{B_{-2}}$};
	\draw (6.7,2.8) node[anchor= east] {$\textcolor{red}{e}$};
	\draw (6.53,2.8) node[anchor= west] {$\textcolor{blue}{o}$};
	\draw (4.9,2.2) node[anchor= west] {$\textcolor{blue}{e}$};
	\draw (7.1,3.6) node[anchor= north] {$T_j$};
\end{tikzpicture}
\caption{Same orientation through $T_j$.}
\label{Same orientation}
\end{figure}
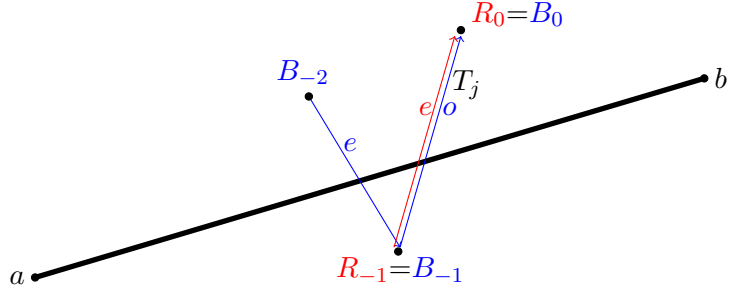
\begin{figure}
\centering
\begin{tikzpicture}[scale=0.8]
	\draw [line width=2pt](0,0)--(11,3.3);
	\draw (0,0) node[anchor=east]{$a$};
	\draw (11,3.3) node[anchor=west]{$b$};
	\draw[blue][->] (6,0.5)--(7,4);
	\draw[blue][->] (4.5,3)--(6,0.5);
	\draw[red][->] (5.9,0.5)--(6.9,4);
	\draw[red][->] (4.42,3)--(5.93,0.5);
	\filldraw (0,0) circle (0.06);
	\filldraw (11,3.3) circle (0.06);
	\filldraw (7,4.1) circle (0.06);
	\filldraw (5.97,0.43) circle (0.06);
	\filldraw (4.5,3) circle (0.06);
	\draw (7,4) node[anchor=south west] {$\textcolor{red}{R_0}$=$\textcolor{blue}{B_0}$};
	\draw (6,0.5) node[anchor=north ] {$\textcolor{red}{R_{-1}}$=$\textcolor{blue}{B_{-1}}$};
	\draw (3.2,3) node[anchor=south west] {$\textcolor{red}{R_{-2}}$=$\textcolor{blue}{B_{-2}}$};
	\draw (6.7,2.8) node[anchor= east] {$\textcolor{red}{e}$};
	\draw (6.53,2.8) node[anchor= west] {$\textcolor{blue}{o}$};
	\draw (4.8,2.3) node[anchor= north] {$\textcolor{red}{o}$};
	\draw (4.9,2.2) node[anchor= west] {$\textcolor{blue}{e}$};
	\draw (7.1,3.6) node[anchor= north] {$T_j$};
\end{tikzpicture}
\caption{Same orientation through $T_j$: case that $R_{-2}=B_{-2}$.}
\label{R_{-2}=B_{-2}}
\end{figure}
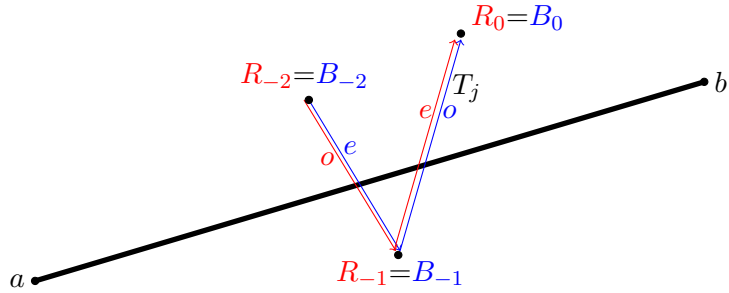
\

	\textbf{Step 2:} There are two possible orientations that $T$-paths $\mcP_1$ and $\mcP_2$ pass through the diagonal $T_j$. Now, we claim that 
	$\mcP_1$ and $\mcP_2$ pass through $T_j$ in the opposite orientation. Otherwise, we assume that their orientations are same such that \begin{align}R_{-1}R_0=B_{-1}B_0=T_j,\end{align} where the vertices satisfy $R_0=B_0$ and $R_{-1}=B_{-1}$. By $(\text{T5})$, there is a diagonal $B_{-2}B_{-1}$ in the given triangulation $T$ which is crossed by $ab$ and passed through by $\mcP_2$ in the even step, see \Cref{Same orientation}. Then, there are several choices as follows for the positions of the vertex $R_{-2}$ in the $T$-path $\mcP_1$. Now, we use the notation $x_{R_{i-1}R_i}$ or $x_{B_{i-1}B_i}$ to represent the initial cluster variable corresponding to the edge $R_{i-1}R_i$ or $B_{i-1}B_i$.
	\begin{enumerate}[leftmargin=2em]
		\item If $R_{-2}=B_{-2}$ (see \Cref{R_{-2}=B_{-2}}), then there are at least two different items in \eqref{two x(P)} as follows:		\begin{align}
			x(\mcP_1)=\dots x_{R_{-2}R_{-1}}x_j^{-1}\dots,\ x(\mcP_2)=\dots x^{-1}_{B_{-2}B_{-1}}x_j\dots,
		\end{align}
		where $x_{R_{-2}R_{-1}}=x_{B_{-2}B_{-1}}$. Since we focus on the case that there is only one different item about $x_j$ in \eqref{two x(P)}, we exclude this case.
		\item If $R_{-2}\neq B_{-2}$ and the edge $R_{-2}R_{-1}$ is not a boundary, then there are two possible cases, see \Cref{$R_{-2}R_{-1}$ is diagonal}. In the first case, by $(\text{T3})-(\text{T6})$, the term $x^{-1}_{B_{-2}B_{-1}}$ in $x(\mcP_2)$ does not appear in $x(\mcP_1)$. Similarly, in the second case, the term $x_{R_{-2}R_{-1}}$ in $x(\mcP_1)$ does not appear in $x(\mcP_2)$. Then, we exclude this case for the same reason as above.
  \begin{figure}
\centering
\begin{tikzpicture}[scale=0.5]
	\draw [line width=2pt](0,0)--(11,3.3);
	\draw (0,0) node[anchor=east]{$a$};
	\draw (11,3.3) node[anchor=west]{$b$};
	\draw[blue][->] (6,0.5)--(7,4);
	\draw[blue][->] (4.5,3)--(6,0.5);
	\draw[red][->] (5.9,0.5)--(6.9,4);
	\draw[red][->] (4,0.7)--(5.9,0.5);
	\filldraw (0,0) circle (0.06);
	\filldraw (11,3.3) circle (0.06);
	\filldraw (7,4.1) circle (0.06);
	\filldraw (5.97,0.43) circle (0.06);
	\filldraw (4,0.7) circle (0.06);
	\filldraw (4.5,3) circle (0.06);
	\draw (5.55,4) node[anchor=south west] {$\textcolor{red}{R_0}$=$\textcolor{blue}{B_0}$};
	\draw (6,0.5) node[anchor=north ] {$\textcolor{red}{R_{-1}}$=$\textcolor{blue}{B_{-1}}$};
	\draw (3.5,3) node[anchor=south west] {$\textcolor{blue}{B_{-2}}$};
	\draw (4,0.7) node[anchor=north east] {$\textcolor{red}{R_{-2}}$};
	\draw (6.7,2.8) node[anchor= east] {$\textcolor{red}{e}$};
	\draw (6.53,2.8) node[anchor= west] {$\textcolor{blue}{o}$};
	\draw (4.9,1.3) node[anchor= north] {$\textcolor{red}{o}$};
	\draw (4.85,2.2) node[anchor= west] {$\textcolor{blue}{e}$};
	\draw (6.8,2) node[anchor= north] {$T_j$};
	\draw [line width=2pt](12,0)--(23,3.3);
	\draw (0,0) node[anchor=east]{$a$};
	\draw (23,3.3) node[anchor=west]{$b$};
	\draw[blue][->] (18,0.5)--(19,4);
	\draw[blue][->] (16.5,3)--(18,0.5);
	\draw[red][->] (17.9,0.5)--(18.9,4);
	\draw[red][->] (17.6,4)--(17.9,0.5);
	\filldraw (12,0) circle (0.06);
	\filldraw (23,3.3) circle (0.06);
	\filldraw (19,4.1) circle (0.06);
	\filldraw (17.97,0.43) circle (0.06);
	\filldraw (17.6,4) circle (0.06);
	\filldraw (16.5,3) circle (0.06);
	\draw (18.4,4) node[anchor=south west] {$\textcolor{red}{R_0}$=$\textcolor{blue}{B_0}$};
	\draw (18,0.5) node[anchor=north ] {$\textcolor{red}{R_{-1}}$=$\textcolor{blue}{B_{-1}}$};
	\draw (15.5,3) node[anchor=south west] {$\textcolor{blue}{B_{-2}}$};
	\draw (17.6,4) node[anchor=south ] {$\textcolor{red}{R_{-2}}$};
	\draw (18.7,2.8) node[anchor= east] {$\textcolor{red}{e}$};
	\draw (18.53,2.8) node[anchor= west] {$\textcolor{blue}{o}$};
	\draw (17.45,3) node[anchor= north] {$\textcolor{red}{o}$};
	\draw (16.2,2.2) node[anchor= west] {$\textcolor{blue}{e}$};
\draw (18.8,2) node[anchor= north] {$T_j$};
\end{tikzpicture}
\caption{Two cases that $R_{-2}R_{-1}$ are diagonals with $R_{-2}\neq B_{-2}$.}
\label{$R_{-2}R_{-1}$ is diagonal}
\end{figure}
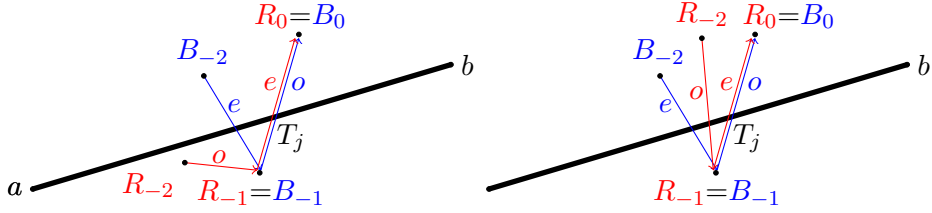
		\item If $R_{-2}\neq B_{-2}$ and the edge $R_{-2}R_{-1}$ is a boundary, see \Cref{$R_{-2}R_{-1}$ is boundary}. By $(\text{T3})-(\text{T6})$, the term $x^{-1}_{B_{-2}B_{-1}}$ in $x(\mcP_2)$ does not appear in $x(\mcP_1)$. Hence, we  exclude this case as well.
\begin{figure}
\centering
\begin{tikzpicture}[scale=0.8]
	\draw [line width=2pt](0,0)--(11,3.3);
	\draw (0,0) node[anchor=east]{$a$};
	\draw (11,3.3) node[anchor=west]{$b$};
	\draw[blue][->] (6,0.5)--(7,4);
	\draw[blue][->] (4.5,3)--(6,0.5);
	\draw[red][->] (5.9,0.5)--(6.9,4);
	\draw[red][->] (4,0.7)--(5.9,0.5);
	\filldraw (0,0) circle (0.06);
	\filldraw (11,3.3) circle (0.06);
	\filldraw (7,4.1) circle (0.06);
	\filldraw (5.97,0.43) circle (0.06);
	\filldraw (4,0.7) circle (0.06);
	\filldraw (4.5,3) circle (0.06);
	\draw (7,4) node[anchor=south west] {$\textcolor{red}{R_0}$=$\textcolor{blue}{B_0}$};
	\draw (6,0.5) node[anchor=north ] {$\textcolor{red}{R_{-1}}$=$\textcolor{blue}{B_{-1}}$};
	\draw (4,3) node[anchor=south west] {$\textcolor{blue}{B_{-2}}$};
	\draw (4,0.7) node[anchor=north east] {$\textcolor{red}{R_{-2}}$};
	\draw (6.7,2.8) node[anchor= east] {$\textcolor{red}{e}$};
	\draw (6.53,2.8) node[anchor= west] {$\textcolor{blue}{o}$};
	\draw (4.8,1.2) node[anchor= north] {$\textcolor{red}{\widetilde{o}}$};
	\draw (4.5,2.2) node[anchor= west] {$\textcolor{blue}{e}$};
	\draw (7.1,3.6) node[anchor= north] {$T_j$};
\end{tikzpicture}
\caption{Same orientation through $T_j$: case that $R_{-2}R_{-1}$ is a boundary.}
\label{$R_{-2}R_{-1}$ is boundary}
\end{figure}
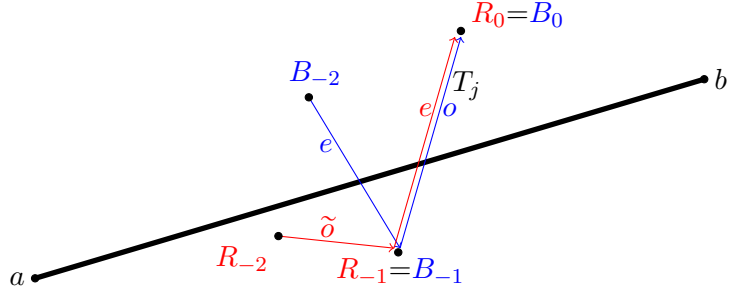  
		\end{enumerate}
Consequently, the $T$-paths $\mcP_1$ and $\mcP_2$ satisfying \eqref{two x(P)} must pass through the diagonal $T_j$ in the opposite orientation.

	\textbf{Step 3:} Finally, we focus on proving that if $a_{i_1,\dots,-1,\dots,i_m}a_{i_1,\dots,1,\dots,i_m}\neq 0$, then $a_{i_1,\dots,0,\dots,i_m}=2$. Note that the edge $R_{-2}R_{-1}$ does not cross the edge $B_{-2}B_{-1}$. Therefore, there are several possible choices for the positions of the vertex $R_{-2}$.
	\begin{enumerate}[leftmargin=2em]
		\item If $R_{-2}\neq B_{-2}$ and it lies between the vertices $B_{-2}$ and $R_{-1}$, see \Cref{RB}. Note that $\mcP_{1}$ passes through $R_{-3}R_{-2}$ in the even step. Hence, by $(\text{T5})$ and the non-crossing of diagonals in a triangulation, we get \begin{align}R_{-3}=R_{0}=B_{-1}.\end{align} However, no matter whether $R_{-2}R_{-1}$ is a diagonal or a boundary, the rules $(\text{T3})$ to $(\text{T6})$ guarantee that the term $x^{-1}_{R_{-3}R_{-2}}$ in $x(\mcP_1)$ does not appear in $x(\mcP_2)$. Hence, we exclude this case.
  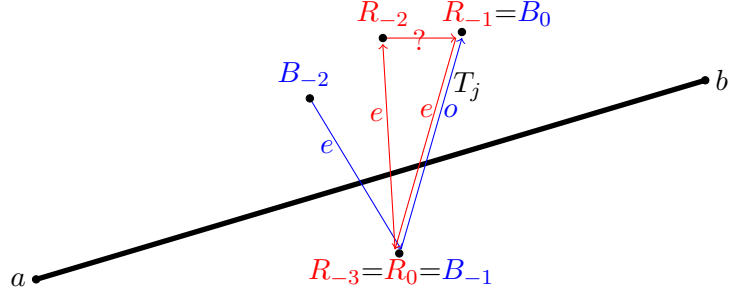
\begin{figure}
\centering
\begin{tikzpicture}[scale=0.8]
	\draw [line width=2pt](0,0)--(11,3.3);
	\draw (0,0) node[anchor=east]{$a$};
	\draw (11,3.3) node[anchor=west]{$b$};
	\draw[blue][->] (6,0.5)--(7,4);
	\draw[blue][->] (4.5,3)--(6,0.5);
	\draw[red][->] (6.9,4)--(5.9,0.5);
	\draw[red][->] (5.7,4)--(6.9,4);
	\draw[red][->] (5.9,0.5)--(5.7,3.9);
	\filldraw (0,0) circle (0.06);
	\filldraw (11,3.3) circle (0.06);
	\filldraw (7,4.1) circle (0.06);
	\filldraw (5.97,0.43) circle (0.06);
	\filldraw (5.7,4) circle (0.06);
	\filldraw (4.5,3) circle (0.06);
	\draw (6.5,4) node[anchor=south west] {$\textcolor{red}{R_{-1}}$=$\textcolor{blue}{B_0}$};
	\draw (6,0.5) node[anchor=north ] {$\textcolor{red}{R_{-3}}$=$\textcolor{red}{R_{0}}$=$\textcolor{blue}{B_{-1}}$};
	\draw (3.8,3) node[anchor=south west] {$\textcolor{blue}{B_{-2}}$};
	\draw (5.7,4) node[anchor=south] {$\textcolor{red}{R_{-2}}$};
	\draw (6.7,2.8) node[anchor= east] {$\textcolor{red}{e}$};
	\draw (6.53,2.8) node[anchor= west] {$\textcolor{blue}{o}$};
	\draw (5.6,3) node[anchor= north] {$\textcolor{red}{e}$};
	\draw (4.5,2.2) node[anchor= west] {$\textcolor{blue}{e}$};
	\draw (6.3,4.3) node[anchor= north] {$\textcolor{red}{?}$};
	\draw (7.1,3.6) node[anchor= north] {$T_j$};
\end{tikzpicture}
\caption{Opposite orientation through $T_j$: case that $R_{-2}\neq B_{-2}$.}
\label{RB}
\end{figure}
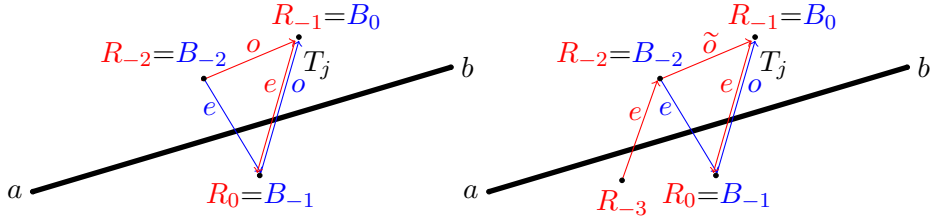
\begin{figure}
\centering
\begin{tikzpicture}[scale=0.5]
	\draw [line width=2pt](0,0)--(11,3.3);
	\draw (0,0) node[anchor=east]{$a$};
	\draw (11,3.3) node[anchor=west]{$b$};
	\draw[blue][->] (6,0.5)--(7,4);
	\draw[blue][->] (4.5,3)--(6,0.5);
	\draw[red][->] (6.9,4)--(5.9,0.5);
	\draw[red][->] (4.43,2.95)--(6.9,4);
	\filldraw (0,0) circle (0.06);
	\filldraw (11,3.3) circle (0.06);
	\filldraw (7,4.1) circle (0.06);
	\filldraw (5.97,0.43) circle (0.06);
	\filldraw (4.5,3) circle (0.06);
	\draw (6.,4) node[anchor=south west] {$\textcolor{red}{R_{-1}}$=$\textcolor{blue}{B_0}$};
	\draw (6,0.5) node[anchor=north ] {$\textcolor{red}{R_{0}}$=$\textcolor{blue}{B_{-1}}$};
	\draw (1.5,3) node[anchor=south west] {$\textcolor{red}{R_{-2}}$=$\textcolor{blue}{B_{-2}}$};
	\draw (6.7,2.8) node[anchor= east] {$\textcolor{red}{e}$};
	\draw (6.53,2.8) node[anchor= west] {$\textcolor{blue}{o}$};
	\draw (4.2,2.2) node[anchor= west] {$\textcolor{blue}{e}$};
	\draw (5.8,4.3) node[anchor= north] {$\textcolor{red}{o}$};
	\draw (7.5,4) node[anchor= north] {$T_j$};
	
	\draw [line width=2pt](12,0)--(23,3.3);
	\draw (12,0) node[anchor=east]{$a$};
	\draw (23,3.3) node[anchor=west]{$b$};
	\draw[blue][->] (18,0.5)--(19,4);
	\draw[blue][->] (16.5,3)--(18,0.5);
	\draw[red][->] (18.9,4)--(17.9,0.5);
	\draw[red][->] (16.43,2.95)--(18.9,4);
	\draw[red][->] (15.5,0.3)--(16.43,2.95);
	\filldraw (12,0) circle (0.06);
	\filldraw (23,3.3) circle (0.06);
	\filldraw (19,4.1) circle (0.06);
	\filldraw (17.97,0.43) circle (0.06);
	\filldraw (15.5,0.3) circle (0.06);
	\filldraw (16.5,3) circle (0.06);
	\draw (18,4) node[anchor=south west] {$\textcolor{red}{R_{-1}}$=$\textcolor{blue}{B_0}$};
	\draw (18,0.5) node[anchor=north ] {$\textcolor{red}{R_{0}}$=$\textcolor{blue}{B_{-1}}$};
	\draw (13.5,3) node[anchor=south west] {$\textcolor{red}{R_{-2}}$=$\textcolor{blue}{B_{-2}}$};
	\draw (15.5,0.3) node[anchor=north ] {$\textcolor{red}{R_{-3}}$};
	\draw (18.7,2.8) node[anchor= east] {$\textcolor{red}{e}$};
	\draw (18.53,2.8) node[anchor= west] {$\textcolor{blue}{o}$};
	\draw (15.85,2.5) node[anchor= north] {$\textcolor{red}{e}$};
	\draw (16.2,2.2) node[anchor= west] {$\textcolor{blue}{e}$};
	\draw (17.8,4.5) node[anchor= north] {$\textcolor{red}{\widetilde{o}}$};
	\draw (19.5,4) node[anchor= north] {$T_j$};
\end{tikzpicture}
\caption{Opposite orientation through $T_j$: local cases that $R_{-2}= B_{-2}$.}
\label{R-3}
\end{figure}
\item If $R_{-2}= B_{-2}$, then $R_{-2}R_{-1}$ must be a boundary. Otherwise, by $(\text{T3})-(\text{T6})$, the term $x_{R_{-2}R_{-1}}$ in $x(\mcP_1)$ does not appear in $x(\mcP_2)$, see the left part of \Cref{R-3} and we exclude it. Furthermore, we claim that $R_{-3}=R_{0}=B_{-1}$. Otherwise, by $(\text{T3})-(\text{T6})$, we get $R_{-3}R_{-2}$ crosses $ab$ and $R_{-3}$ lies on the left hand side of $R_0$, see the right part of \Cref{R-3}. Then, the term $x^{-1}_{B_{-2}B_{-1}}$ in $x(\mcP_2)$ does not appear in $x(\mcP_1)$ and we exclude it. To ensure the terms except $x_j$ in $x(\mcP_1)$ and $x(\mcP_2)$ are the same, by $(\text{T3})-(\text{T6})$, both $B_{-3}B_{-2}$ and $R_{-3}R_{-3}$ need to be boundaries. In addition, we have \begin{align}R_{-5}=B_{-3}, \ B_{-4}=R_{-4},\end{align} which provide the same term $x^{-1}_{R_{-5}R_{-4}}=x^{-1}_{B_{-4}B_{-3}}$ in $x(\mcP_1)$ and $x(\mcP_2)$. By analogy, more possible quadrangles consisted of two diagonals and two boundaries such as $B_{-4}B_{-3}B_{-2}B_{-1}$ are generated and the two $T$-paths will end up with the same endpoint $a$. Without loss of generality, on the other side of $T_j$, there is a similar phenomenon and the two $T$-paths will end up with the same endpoint $b$, see \Cref{R=B}. Then, all the terms except $x_j$ in $x(\mcP_1)$ and $x(\mcP_2)$ are same. Furthermore, by \Cref{coeff 012} and \Cref{prop-rmk}, we obtain that there is only one such $T$-path $\mcP_1$ and $\mcP_2$ if existing, which implies that \begin{align}a_{i_1,\dots,-1,\dots,i_m}=a_{i_1,\dots,1,\dots,i_m}=1.\end{align} 
\begin{figure}
\centering
\begin{tikzpicture}[scale=0.8]
	\draw [line width=2pt](0,0)--(11,3.3);
	\draw (0,0) node[anchor=east]{$a$};
	\draw (11,3.3) node[anchor=west]{$b$};
	\draw[blue][->] (6,0.5)--(7,4);
	\draw[blue][->] (4.5,3)--(5.8,0.5);
	\draw[blue][->] (3,2)--(4.42,3);
	\draw[red][->] (2.96,1.95)--(3.46,0.55);
	\draw[red][->] (6.92,4)--(5.92,0.5);
	\draw[red][->] (4.5,3)--(6.9,4);
	\draw[red][->] (5.9,0.43)--(4.57,3);
	\draw[red][->] (3.5,0.5)--(5.8,0.45);
	\draw[blue][->] (3.52,0.5)--(3.02,1.93);
	\draw[blue][->] (7.03,4)--(8,1.2);
	\draw[red][->] (8.09,1.2)--(7.12,4);
	\draw[red][->] (5.98,0.45)--(7.94,1.05);
	\draw[red][->] (7.1,4.1)--(8.6,3.98);
	\draw[blue][->] (7.94,1.05)--(8.91,1.75);
	\draw[blue][->] (8.97,1.75)--(8.63,3.9);
	\draw[red][->] (8.73,3.9)--(9.04,1.88);
	\draw [ dotted, line width=1pt] (3,2)--(5.88,0.43);
	\draw [blue][->] (2.2,0.23)--(3.4,0.55);
	\draw [red][->] (1.6,1.3)--(2.92,2);
	\draw [red][->] (9.04,1.88)--(9.8,2.5);
	\draw [blue][->] (8.73,3.95)--(9.7,3.8);
	\draw [ dotted, line width=1pt] (7,4.1)--(9,1.8);
	\draw [blue][->] (0,0)--(1.2,0.1);
	\draw (1.75,0.32) node[anchor=north] {$\dots$};
	\draw [red][->] (0,0)--(1,1);
	\draw (1.35,1.35) node[anchor=north] {$\dots$};
	\draw (2.4,1.2) node[anchor=north] {$\dots$};
	\draw [blue][->] (10.4,3.8)--(11,3.4);
	\draw (10.1,4) node[anchor=north] {$\dots$};
	\draw [red][->] (10.5,2.8)--(11,3.2);
	\draw (10.2,2.84) node[anchor=north] {$\dots$};
	\draw (9.7,3.4) node[anchor=north] {$\dots$};
	\filldraw (1.27,0.1) circle (0.06);
	\filldraw (1.05,1.05) circle (0.06);
	\filldraw (2.13,0.2) circle (0.06);
	\filldraw (1.6,1.3) circle (0.06);
	\filldraw (10.5,2.8) circle (0.06);
	\filldraw (10.4,3.8) circle (0.06);
	\filldraw (9.84,2.55) circle (0.06);
	\filldraw (9.74,3.8) circle (0.06);
	\filldraw (0,0) circle (0.06);
	\filldraw (11,3.3) circle (0.06);
	\filldraw (7,4.1) circle (0.06);
	\filldraw (5.88,0.43) circle (0.06);
	\filldraw (4.5,3) circle (0.06);
	\filldraw (3,2) circle (0.06);
	\filldraw (3.5,0.5) circle (0.06);
	\filldraw (8.04,1.1) circle (0.06);
	\filldraw (8.7,4) circle (0.06);
	\filldraw (9,1.8) circle (0.06);
	\draw (7,4) node[anchor=south] {$\textcolor{red}{R_{-1}}$=$\textcolor{red}{R_{2}}$=$\textcolor{blue}{B_0}$};
	\draw (6,0.5) node[anchor=north ] {$\textcolor{red}{R_{-3}}$=$\textcolor{red}{R_{0}}$=$\textcolor{blue}{B_{-1}}$};
	\draw (4.5,3.3) node[anchor=south ] {$\textcolor{red}{R_{-2}}$=$\textcolor{blue}{B_{-2}}$};
	\draw (3.4,2) node[anchor=south east ] {$\textcolor{red}{R_{-5}}$=$\textcolor{blue}{B_{-3}}$};
	\draw (3.5,0.3) node[anchor= north ] {$\textcolor{red}{R_{-4}}$=$\textcolor{blue}{B_{-4}}$};
	\draw (8.2,0.3) node[anchor=south ] {$\textcolor{red}{R_{1}}$=$\textcolor{blue}{B_{1}}$};
	\draw (9.4,1) node[anchor=south ] {$\textcolor{red}{R_{4}}$=$\textcolor{blue}{B_{2}}$};
	\draw (9.3,4) node[anchor=south ] {$\textcolor{red}{R_{3}}$=$\textcolor{blue}{B_{3}}$};
	\draw (6.7,2.8) node[anchor= east] {$\textcolor{red}{e}$};
	\draw (6.1,3.8) node[anchor= east] {$\textcolor{red}{\tilde{o}}$};
	\draw (6.53,2.8) node[anchor= west] {$\textcolor{blue}{o}$};
    \draw (3.4,2.75) node[anchor= west] {$\textcolor{blue}{\tilde{o}}$};
	\draw (4.5,2.1) node[anchor= west] {$\textcolor{blue}{e}$};
	\draw (4.85,2.3) node[anchor= west] {$\textcolor{red}{e}$};
	\draw (2.7,1.3) node[anchor= west] {$\textcolor{red}{e}$};
	\draw (3.1,1.4) node[anchor= west] {$\textcolor{blue}{e}$};
	\draw (7.25,2) node[anchor= west] {$\textcolor{blue}{e}$};
	\draw (7.68,2.1) node[anchor= west] {$\textcolor{red}{e}$};
	\draw (4.2,0.7) node[anchor= west] {$\textcolor{red}{\tilde{o}}$};
	\draw (6.7,1) node[anchor= west] {$\textcolor{red}{\tilde{o}}$};
	\draw (7.7,3.8) node[anchor= west] {$\textcolor{red}{\tilde{o}}$};
	\draw (8.2,1.7) node[anchor= west] {$\textcolor{blue}{\tilde{o}}$};
	\draw (8.3,3) node[anchor= west] {$\textcolor{blue}{e}$};
	\draw (8.7,3.1) node[anchor= west] {$\textcolor{red}{e}$};
	\draw (2,1.9) node[anchor= west] {$\textcolor{red}{\tilde{o}}$};
	\draw (0.2,0.8) node[anchor= west] {$\textcolor{red}{\tilde{o}}$};
	\draw (0.3,-0.2) node[anchor= west] {$\textcolor{blue}{\tilde{o}}$};
	\draw (2.5, 0.59) node[anchor= west] {$\textcolor{blue}{\tilde{o}}$};
	\draw (9.2,2) node[anchor= west] {$\textcolor{red}{\tilde{o}}$};
	\draw (10.5,2.8) node[anchor= west] {$\textcolor{red}{\tilde{o}}$};
	\draw (8.93,3.65) node[anchor= west] {$\textcolor{blue}{\tilde{o}}$};
	\draw (10.5,3.8) node[anchor= west] {$\textcolor{blue}{\tilde{o}}$};
	\draw (6.1,2.6) node[anchor= north] {$T_j$};
\end{tikzpicture}
\caption{Opposite orientation through $T_j$: overall case that $R_{-2}=B_{-2}$.}
\label{R=B}
\end{figure}
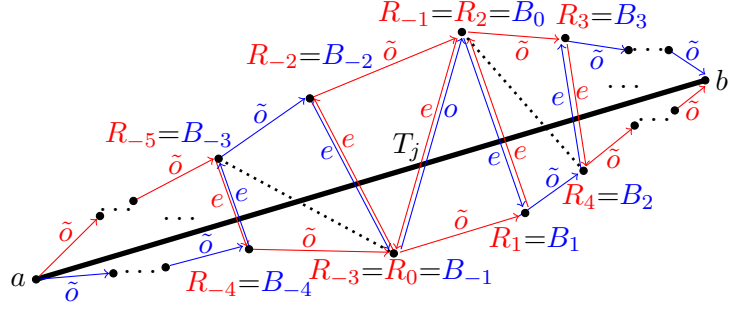
Now, we claim that $a_{i_1,\dots,0,\dots,i_m}=2$. Note that in \Cref{R=B}, the $T$-path $\mcP_1$ is indexed successively by vertices \begin{align}(a=R_s,\dots,R_{-4},R_{-3},R_{-2},R_{-1},R_{0},R_{1},R_{2},R_{3},\dots,R_t=b), \end{align}
		and $T$-path $\mcP_2$ is indexed successively by vertices \begin{align}(a=B_u,\dots,B_{-3},B_{-2},B_{-1},B_{0},B_{1},B_{2},\dots,B_v=b), \end{align}
		such that \begin{align}
			\left\{
		\begin{array}{ll} 
		R_{-3}=R_0=B_{-1},\\
		R_{-1}=R_2=B_{0},\\
			R_{2k+1}=B_{2k+1},\  k\geq 0, \\
			R_{2k+2}=B_{2k},\ \ \ \ k\geq 1,\\
			R_{2k}=B_{2k},\  \ \ \ \ \ \  k\leq -1,\\
			R_{2k-3}=B_{2k-1},\    k\leq -1.
		\end{array} \right.
		\end{align} Hence, there are two $T$-paths $\mcP_3$ and $\mcP_4$ which provide a same Laurent monomial \begin{align}x_1^{i_1}\dots x_{j-1}^{i_{j-1}}x_{j+1}^{i_{j+1}}\dots x_m^{i_m},\end{align} where $\mcP_3$ is indexed successively by vertices \begin{align}(a=B_u,\dots,B_{-3},B_{-2},B_{-1},R_{1},R_{2},R_{3}\dots R_t=b ),\end{align} and $\mcP_4$ is indexed successively by vertices \begin{align}(a=R_s,\dots,R_{-4},R_{-3},R_{-2},R_{-1},B_{1},B_{2},\dots,B_v=b).\end{align} Therefore, we conclude that if $a_{i_1,\dots,-1,\dots,i_m}a_{i_1,\dots,1,\dots,i_m}\neq 0$, then $a_{i_1,\dots,0,\dots,i_m}=2$.
	\end{enumerate}
 
In conclusion, it is direct that $2^2\geq 1\times 1$ and the inequality \eqref{inequality} holds, which implies that all the cluster variables of type $A_n$ are log-concave.
\end{proof}
\begin{remark}
	In \Cref{R=B}, the $T$-paths $\mcP_1$ and $\mcP_2$ together with their corresponding Laurent monomials $x(\mcP_1)$ and $x(\mcP_2)$ are independent of the choices of the diagonals in each quadrangle such as $B_{-4}B_{-3}B_{-2}B_{-1}$. Particularly, we use the black dotted lines to represent them. In addition, we can refer to \Cref{T-example} as an example of \Cref{main1}.
\end{remark}
\section{Log-concavity of cluster monomials of type $A_2$}
In this section, we aim to prove the log-concavity of all the cluster monomials of type $A_2$. Afterwards, we give a conjecture with respect to the general cases of type $A_n$ with $n\geq 3$.

Firstly, based on the Laurent phenomenon of cluster variables, we give a novel definition about cluster monomials.
\begin{definition}[\emph{Log-concavity of cluster monomials}]
	For a coefficient-free cluster algebra $\mcA$, a cluster monomial is called \emph{log-concave} if it is log-concave as a Laurent polynomial with respect to the initial cluster $\mfx$.
\end{definition}
Then, we give a preliminary lemma as follows.
\begin{lemma}\label{binomial2}
	For $0\leq k\leq n-1$, the inequality related with binomial coefficients holds: \begin{align}(C_n^k)^2\geq C_{n-1}^kC_{n+1}^k.\end{align}
\end{lemma}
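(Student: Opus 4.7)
The plan is to reduce the inequality to a direct arithmetic comparison by computing the ratio of the two sides explicitly. Write out the three binomial coefficients as factorials
\begin{align*}
C_n^k = \frac{n!}{k!(n-k)!}, \quad C_{n-1}^k = \frac{(n-1)!}{k!(n-1-k)!}, \quad C_{n+1}^k = \frac{(n+1)!}{k!(n+1-k)!},
\end{align*}
where the range $0 \leq k \leq n-1$ ensures all the factorials are well-defined.

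Next, I would form the ratio $(C_n^k)^2 / (C_{n-1}^k \cdot C_{n+1}^k)$. The $k!$ factors cancel, and the remaining factorials collapse via $n!/(n-1)! = n$, $(n+1)!/n! = n+1$, $(n+1-k)!/(n-k)! = n+1-k$, and $(n-k)!/(n-1-k)! = n-k$. After simplification this ratio equals
\begin{align*}
\frac{n(n+1-k)}{(n+1)(n-k)}.
\end{align*}
So the lemma reduces to showing $n(n+1-k) \geq (n+1)(n-k)$.

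Finally, expanding both sides gives $n^2 + n - nk$ on the left and $n^2 + n - nk - k$ on the right, so the inequality becomes $k \geq 0$, which is exactly the hypothesis. This gives $\geq$, with equality precisely when $k = 0$. There is no real obstacle here: the entire argument is a one-step factorial manipulation. The only thing worth flagging is to confirm the edge cases $k=0$ (trivial equality) and $k = n-1$ (where $C_{n+1}^k/C_n^k = (n+1)/(n-k+1) \cdot \ldots$ still reduces correctly since all factorials appearing remain nonnegative), so that the simplification is valid throughout the stated range.
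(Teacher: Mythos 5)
Your proposal is correct and follows essentially the same route as the paper: both compute the ratio $(C_n^k)^2/(C_{n-1}^kC_{n+1}^k)=\frac{n(n-k+1)}{(n+1)(n-k)}$ and observe that numerator minus denominator equals $k\geq 0$. Nothing further is needed.
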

\begin{proof}
	We only need to note that \begin{align}
		\frac{(C_n^k)^2}{C_{n-1}^{k}C_{n+1}^{k}}=\frac{(n-k+1)n}{(n-k)(n+1)}=\frac{n^2-kn+n}{n^2-kn+n-k}\geq 1. 
	\end{align}
	Hence, the inequality holds. 
\end{proof}
Now, we are ready to prove another main theorem.
\begin{theorem}\label{main theorem}
	The cluster monomials of type $A_2$ are log-concave.
\end{theorem}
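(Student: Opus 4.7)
The plan is to work through the five classes of cluster monomials of type $A_2$ enumerated in Example~\ref{A2 type} and verify Definition~\ref{log-concave2} in each. The involution $x_1 \leftrightarrow x_2$ permutes the list, pairing Class~2 with Class~5 and Class~3 with Class~4, so only three cases need explicit treatment.

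Class~1 is the Laurent monomial $x_1^{m_1} x_2^{m_2}$, which is trivially log-concave. For Class~2, of the form $(x_2+1)^{m_1} x_2^{m_2}/x_1^{m_1}$, Remark~\ref{same up} lets me discard the Laurent factor $x_1^{-m_1} x_2^{m_2}$ and reduce to proving that $(x_2+1)^{m_1}$ is log-concave; this is immediate from Example~\ref{log-concave examples}(1). The $x_1$-direction has only a single slice, so the corresponding inequality is automatic. Class~5 then follows by the symmetry.

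The central case is Class~3, namely $(x_2+1)^{m_1}(x_1+x_2+1)^{m_2}/(x_1^{m_1+m_2} x_2^{m_2})$. After stripping the Laurent factor via Remark~\ref{same up}, I must show that
\[
P(x_1, x_2) := (x_2+1)^{m_1}(x_1+x_2+1)^{m_2}
\]
is log-concave. The key step is to expand the second factor by grouping $x_1$ against $(x_2+1)$, which yields
\[
P(x_1, x_2) = \sum_{k=0}^{m_2} \binom{m_2}{k} x_1^k (x_2+1)^{m_1+m_2-k},
\]
and hence the clean factorization $a_{k,j} = \binom{m_2}{k}\binom{m_1+m_2-k}{j}$ for the coefficient of $x_1^k x_2^j$. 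Log-concavity in the $x_2$-direction then reduces to $\binom{n}{j}^2 \geq \binom{n}{j-1}\binom{n}{j+1}$ with $n = m_1 + m_2 - k$, which is the log-concavity of $(x+1)^n$ from Example~\ref{log-concave examples}(1). In the $x_1$-direction the inequality splits into a product of two applications of Lemma~\ref{binomial2}, one to $\binom{m_2}{k}^2 \geq \binom{m_2}{k-1}\binom{m_2}{k+1}$ and the other to $\binom{m_1+m_2-k}{j}^2 \geq \binom{m_1+m_2-k-1}{j}\binom{m_1+m_2-k+1}{j}$. Boundary cases where a neighbor coefficient vanishes are automatic, since the right-hand side of the log-concavity inequality is then zero. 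Class~4 is obtained from Class~3 by the symmetry $x_1 \leftrightarrow x_2$. The main obstacle I foresee is precisely spotting the factorization $a_{k,j} = \binom{m_2}{k}\binom{m_1+m_2-k}{j}$; expanding $(x_1+x_2+1)^{m_2}$ directly as a trinomial produces $a_{k,j}$ as a convolution of multinomial coefficients whose log-concavity is much harder to read off.
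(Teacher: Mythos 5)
Your proposal is correct and follows essentially the same route as the paper: reduce each of the five classes via Remark~\ref{same up}, handle Class~3 by expanding $(x_1+x_2+1)^{m_2}$ against $(x_2+1)$ to obtain the factorization $a_{k,j}=\binom{m_2}{k}\binom{m_1+m_2-k}{j}$, and then apply the log-concavity of binomial coefficients in the lower index together with Lemma~\ref{binomial2} for the upper index. The only cosmetic difference is that the paper dispatches Classes~4 and~5 by the same symmetry you invoke, so no further comment is needed.
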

\begin{proof} According to \Cref{A2 type}, there are five classes of cluster monomials and we prove that they are log-concave one by one.
\begin{enumerate}[leftmargin=2em]
	\item It is direct that the cluster monomials $x_1^{m_1}x_2^{m_2}$ are log-concave.  
	\item Since $(x_2+1)^{m_1}$ is log-concave, by \Cref{same up}, we get $(\frac{x_2+1}{x_1})^{m_1}x_2^{m_2}$ is log-concave.
	\item In order to prove $(\frac{x_2+1}{x_1})^{m_1}(\frac{x_1+x_2+1}{x_1x_2})^{m_2}$ is log-concave, we only need to prove that $T_{m_1,m_2}=(x_2+1)^{m_1}(x_1+x_2+1)^{m_2}$ is log-concave. Note that
	\begin{align}
	T_{m_1,m_2}
	=&{}(x_2+1)^{m_1}\sum\limits_{i=0}^{m_2}C_{m_2}^ix_1^{m_2-i}\left(x_2+1\right)^i\notag \\ =&{}\sum\limits_{i=0}^{m_2}C_{m_2}^ix_1^{m_2-i}(x_2+1)^{m_1+i}\notag \\
	=&{} \sum\limits_{i=0}^{m_2}C_{m_2}^ix_1^{m_2-i}\left(\sum\limits_{j=0}^{m_1+i}C_{m_1+i}^jx_2^j\right) \notag \\
	=&{}\sum\limits_{i=0}^{m_2}\sum\limits_{j=0}^{m_1+i}C_{m_2}^iC_{m_1+i}^jx_1^{m_2-i}x_2^j\notag.
\end{align}
Let $k=m_2-i$ and $l=j$. Then we have 
\begin{align}\label{eq1}
	\sum\limits_{i=0}^{m_2}\sum\limits_{j=0}^{m_1+i}C_{m_2}^iC_{m_1+i}^jx_1^{m_2-i}x_2^j=\sum\limits_{k=0}^{m_2}\sum\limits_{l=0}^{m_1+m_2-k}C_{m_2}^{k}C_{m_1+m_2-k}^lx_1^{k}x_2^{l}.
\end{align} Note that for each monomial $x_1^kx_2^l$ in \eqref{eq1}, the corresponding coefficient consists of a single term and we denote it by \begin{align}a_{k,l}=C_{m_2}^{k}C_{m_1+m_2-k}^l.\end{align} According to \Cref{log-concave examples} and \Cref{binomial2}, we conclude that
 \begin{align}
 	a_{k,l}^2=&{}(C_{m_2}^{k}C_{m_1+m_2-k}^l)^2 \notag \\ 	\geq&{}C_{m_2}^{k-1}C_{m_2}^{k+1}C_{m_1+m_2-(k-1)}^lC_{m_1+m_2-(k+1)}^l\notag \\
 	=&{} a_{k-1,l}a_{k+1,l}\notag
 \end{align}
 Similarly, we have $a_{k,l}^2\geq a_{k,l-1}a_{k,l+1}$.
\item By symmetry, it is similar to the proof of (3).
\item Since $(x_1+1)^{m_2}$ is log-concave, by \Cref{same up}, we get $(\frac{x_1+1}{x_2})^{m_1}x_1^{m_2}$ is log-concave.
\end{enumerate} 
Therefore, all the cluster monomials of type $A_2$ are log-concave.
\end{proof}
\begin{remark}
	By use of the properties of binomial coefficients, we have proved the log-concavity of cluster monomials (theta functions) of type $A_2$. However, it is quite difficult to deal with the cases of type $A_n$ with $n\geq 3$, but we believe it is correct. Hence, we give a conjecture to finish this section. 
\end{remark}
\begin{conjecture}\label{An conj}
	The cluster monomials of type $A_n(n\geq 3)$ are log-concave.
\end{conjecture}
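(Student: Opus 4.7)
The plan is to extend the $T$-path machinery from \Cref{main1} to products of cluster variables that all lie in a common cluster. Fix an initial triangulation $T$ (corresponding to the initial cluster) and a second triangulation $T'$ whose diagonals $T'_1,\dots,T'_n$ correspond to the cluster variables $x_{1;t},\dots,x_{n;t}$ appearing in the monomial $\prod_i x_{i;t}^{m_i}$. Schiffler's formula \eqref{expression} gives each $x_{i;t}$ as a sum over $\mathcal{P}_T(a_i,b_i)$ of monomials \eqref{T-express}. Expanding the full product turns the cluster monomial into a sum over tuples $\mathbf{\mcP}=(\mcP_1^{(1)},\dots,\mcP_{m_1}^{(1)},\dots,\mcP_1^{(n)},\dots,\mcP_{m_n}^{(n)})$ of $T$-paths, one for each factor, with weight $x(\mathbf{\mcP})=\prod x(\mcP_l^{(i)})$. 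The coefficient $a_{e_1,\dots,e_n}$ of $x_1^{e_1}\cdots x_n^{e_n}$ in the monomial is thus the number of such tuples producing that Laurent monomial.

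Next I would mimic the three-step strategy of \Cref{main1}, but adjusted to the enlarged range of exponents. Step one: bound the exponent of each $x_j$ in absolute value by $M:=\sum_i m_i$ using the fact that each individual $T$-path contributes $\pm 1$ or $0$ to the $x_j$-degree. Step two: for a fixed index $j$ and a fixed choice of the other exponents $(e_i)_{i\neq j}$, analyze the set of tuples $\mathbf{\mcP}$ whose $x_j$-exponent equals $e_j$. The key local invariant is the signed count $d(\mathbf{\mcP},j)=\#\{\text{odd crossings of }T_j\} - \#\{\text{even crossings of }T_j\}$. Step three: produce an injection
\[
\Phi\colon \{\mathbf{\mcP}:d=e_j-1\}\;\sqcup\;\{\mathbf{\mcP}:d=e_j+1\}\;\hookrightarrow\;\{\mathbf{\mcP}:d=e_j\}\;\times\;\{\mathbf{\mcP}:d=e_j\}
\]
realizing the inequality $a_{\dots,e_j-1,\dots}\,a_{\dots,e_j+1,\dots}\leq a_{\dots,e_j,\dots}^{2}$. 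For $A_2$ this injection is implicit in the binomial identity used in \Cref{main theorem}; in general I would construct $\Phi$ locally around $T_j$ by the same ``opposite-orientation'' swap used in \Cref{R=B}: given a tuple with one extra odd crossing of $T_j$ and a tuple with one extra even crossing, pair them up and perform a surgery exchanging the offending segments, analogous to pairing swap operations in matroid-theoretic log-concavity proofs.

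I expect the main obstacle to be controlling the non-local interactions between the $T$-paths in a tuple. In the single cluster variable case (\Cref{main1}) there is only one $T$-path, so the swap is forced and never destroys admissibility. Here, a surgery that flips the orientation of crossing of $T_j$ inside $\mcP_l^{(i)}$ may force incompatible choices on some other $\mcP_{l'}^{(i')}$ sharing vertices or diagonals, and moreover the rules $(\text{T3})$--$(\text{T6})$ must remain satisfied after the swap. To circumvent this, a promising alternative is to replace the $T$-path model by the snake-graph/perfect-matching model of Musiker-Schiffler-Williams: cluster monomials of type $A_n$ correspond to perfect matchings of a disjoint union of snake graphs attached by ``grafting,'' and log-concavity then becomes a statement about bivariate matching polynomials of band graphs, which is amenable to standard injective techniques (e.g.\ Schwenk-Heilmann-Lieb style arguments).

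Finally, I would carry out the argument first for ``disjoint'' cluster monomials, i.e.\ products where the diagonals $T'_i$ are pairwise far apart in $T$, where the T-paths of different factors essentially do not interact, reducing the claim to a tensor product of the single-variable log-concavity of \Cref{main1}. Then induct on the number of pairs $(T'_i,T'_j)$ that share a triangle of $T$, each inductive step requiring only the local surgery analysis sketched above. This reduction to a local combinatorial lemma isolates where the real difficulty lies and, in the author's view, is the cleanest route to resolving \Cref{An conj}.
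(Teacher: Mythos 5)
This statement is left as an open conjecture in the paper: the authors prove the $A_2$ case (\Cref{main theorem}) by an explicit binomial-coefficient computation and state that the case $n\geq 3$ is ``quite difficult.'' Your proposal is a research plan rather than a proof, and the core of the argument is missing. Concretely: (i) the injection you display is from a \emph{disjoint union} $\{d=e_j-1\}\sqcup\{d=e_j+1\}$ into $\{d=e_j\}\times\{d=e_j\}$, which would only give $a_{\dots,e_j-1,\dots}+a_{\dots,e_j+1,\dots}\leq a_{\dots,e_j,\dots}^2$; log-concavity requires an injection from the Cartesian product $\{d=e_j-1\}\times\{d=e_j+1\}$. (ii) The surgery $\Phi$ is never defined, and you yourself concede the obstruction: flipping the crossing type at $T_j$ inside one path of the tuple can violate (T3)--(T6) or force incompatible changes in the other paths. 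This obstruction is exactly the content of the conjecture. Note also that every structural input to the paper's proof of \Cref{main1} fails here: for a single cluster variable the exponents lie in $\{-1,0,1\}$ and the coefficients are at most $2$ (\Cref{coeff 012}), so only the single inequality $2^2\geq 1\cdot 1$ must be checked; for a monomial with $M=\sum_i m_i$ factors the exponents range over $[-M,M]$ and the coefficients grow (already in $A_2$ they are products of binomials), so no finite case check is available.

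The two fallback routes are also unsound as stated. The Heilmann--Lieb/Schwenk circle of results concerns \emph{univariate} matching polynomials, whereas \Cref{log-concave2} demands coordinatewise log-concavity of a multivariate Laurent polynomial; moreover, a product of polynomials that are each log-concave in the sense of \Cref{log-concave2} need not be log-concave (e.g.\ $x+y$ and $1+xy$ both satisfy \Cref{log-concave2}, but $(x+y)(1+xy)=x+y+x^2y+xy^2$ fails it at the lattice point $(1,1)$ in the $x$-direction, since $0=a_{1,1}^2<a_{0,1}a_{2,1}=1$). This same example shows that your proposed reduction ``disjoint case $=$ tensor product, then induct on interacting pairs'' cannot be purely formal: the base case of genuinely disjoint diagonals is fine, but pure powers $x_{i;t}^{m}$ with $m\geq 2$ --- the first nontrivial instances beyond \Cref{main1} --- are maximally non-disjoint, are not covered by your base case, and already require the full strength of whatever combinatorial lemma you intend to prove. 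As it stands the proposal identifies the right difficulty but does not resolve it, so the conjecture remains open.
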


\section{Log-concavity of $F$-polynomials of type $A_n$}
In this section, we recall the notions of $C$-matrices, $G$-matrices and $F$-polynomials based on \cite{FZ07}. Furthermore, according to \cite{Gyo21}, we give the definitions of $f$-vectors and $F$-matrices.  We aim to prove the log-concavity of $F$-polynomials of type $A_n$ with $n\geq 2$.
\subsection{$F$-polynomials and $f$-vectors}\

Before giving the definition of $F$-polynomials, we recall two crucial notions of $C$-matrices and $G$-matrices as follows.
\begin{definition}[{\cite[equation (5.9), Proposition 6.6]{FZ07}}]
	Let $B_0$ be the initial exchange matrix at $t_0$. Then, the collections of integer square matrices $\big\{C_t^{B_0;t_0}\big\}_{t\in\mbT_n}$ and $\big\{G_t^{B_0;t_0}\big\}_{t\in \mbT_n}$ are recursively defined as follows:
	\begin{enumerate}
		\item ($C$-matrices) The initial condition is
\begin{align}
C^{B_0; t_0}_{t_0}=I_n,
\end{align}
and for any edge $t \stackrel{k}{\longleftrightarrow} t^{\prime}$ in $\mbT_n$, the recursion formula is 
\begin{align}
	C^{B_0; t_0}_{t'}=C^{B_0; t_0}_t\big(J_k+[ B_t]^{k \bullet}_+\big)+\big[- C^{B_0; t_0}_t\big]^{\bullet k}_+ B_t.
\end{align}
\item ($G$-matrices) The initial condition is
\begin{align}
G^{B_0; t_0}_{t_0}=I_n,
\end{align}
and for any edge $t \stackrel{k}{\longleftrightarrow} t^{\prime}$ in $\mbT_n$, the recursion formula is 
\begin{align}
	G^{B_0; t_0}_{t'}=G^{B_0; t_0}_t\big(J_k+[ B_t]^{\bullet k}_+\big)-B_0\big[ C^{B_0; t_0}_t\big]^{\bullet k}_+.
\end{align}
	\end{enumerate}
\end{definition}
\begin{remark}\label{CG}
	By {\cite[Equation (6.14)]{FZ07}}, there is a duality between $C$-matrices and $G$-matrices:
	\begin{align}
	B_0C^{B_0; t_0}_t=G^{B_0; t_0}_tB_t.
	\end{align}
\end{remark}
\begin{definition}[\emph{Principal coefficients}]
	A cluster algebra $\mcA$ is said to have \emph{principal coefficients} at vertex $t_0$ if $\mbP=\text{Trop}(y_1,\dots,y_n)$ and $\mfy_{t_0}=(y_1,\dots,y_n)$.
\end{definition}
When a cluster algebra $\mcA$ has principal coefficients, we can define the \emph{$F$-polynomial} $F^{B_0;t_0}_{i;t}(\mfy)$ as follows:
\begin{align}
F^{B_0;t_0}_{i;t}(\mfy)=x_{i;t}(x_1,\dots, x_n)|_{x_1=\cdots=x_n=1}=N_{i;t}(1,\dots, 1).
\end{align}
Denote the maximal degree of $y_j$ in $F_{i;t}^{B_0;t_0}(\mfy)$ by $f_{ji;t}$. Then, we can define the \emph{$f$-vector} $\mff_{i;t}^{B_0;t_0}$ and \emph{$F$-matrices} $F_t^{B_0;t_0}$ as follows:
\begin{align}
\mff_{i;t}^{B_0;t_0}=\mff_{i;t}=\begin{pmatrix}f_{1i;t}\\ \vdots \\ f_{ni;t} \end{pmatrix},\ F_t^{B_0;t_0}=F_t=(\mff_{1;t},\dots,\mff_{n;t}).
\end{align}
\begin{definition}[{\cite[Corollary 6.3]{FZ07}}]\label{separation}
	Let $\mcA$ be a coefficient-free cluster algebra with the initial exchange matrix $B_0$. Then, for any $t \in \mbT_n$ and $i \in \{1, \dots, n\}$, we have
	\begin{align}
		x_{i;t} = \big( \prod_{j=1}^n x_j^{g_{ji;t}} \big)F_{i;t}^{B_0;t_0}(\hat y_1, \dots, \hat y_n) 
		\label{separation formula}
	\end{align}
where $\hat y_k = \mathop{\prod}\limits_{j=1}^n x_j^{b_{jk}}$ and $g_{ji;t}$ is the $(j,i)$-element of $G_t^{B_0;t_0}$. The formula \eqref{separation formula} is usually called the  \emph{coefficient-free separation formula} of $x_{i;t}$.
\end{definition}
\begin{remark}
	By \Cref{main1} and the separation formula \eqref{separation formula}, we can directly deduce that the Laurent polynomial $F_{i;t}^{B_0;t_0}(\hat y_1, \dots, \hat y_n)$ is log-concave with respect to $x_i$. However, it can't directly imply that each $F$-polynomial $F_{i;t}^{B_0;t_0}( y_1, \dots,  y_n)$ is log-concave with respect to $y_i$.  In the next subsection, we aim to prove this fact. 
\end{remark}
Firstly, there is an important relation between $f$-vectors and $d$-vectors of cluster algebras of finite type (such as type $A_n$) as follows.
\begin{theorem}{\cite[Theorem 1.8]{Gyo21}}\label{gyo21}
	In a cluster algebra $\mcA$ of finite type, for any $i\in \{1,\dots,n\}$ and $t\in \mbT_n$, we have 
	\begin{align}
		\mff_{i;t}=[\mfd_{i;t}]_+.
	\end{align}
\end{theorem}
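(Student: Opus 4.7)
The plan is to prove the equality $\mff_{i;t} = [\mfd_{i;t}]_+$ by induction on the distance from the initial vertex $t_0$ in $\mbT_n$. The base case $t = t_0$ is immediate: the recursion \eqref{d-vector} gives $\mfd_{l;t_0} = -\mfe_l$, hence $[\mfd_{l;t_0}]_+ = 0$, and since $x_{l;t_0}$ is an initial cluster variable we have $F_{l;t_0}(\mfy) = 1$, which forces $\mff_{l;t_0} = 0$. So both sides vanish at $t_0$.

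For the inductive step along an edge $t \stackrel{k}{\longleftrightarrow} t'$, the indices $l \neq k$ are unchanged on both sides ($\mfd_{l;t'} = \mfd_{l;t}$ and $F_{l;t'} = F_{l;t}$), so all the work is concentrated at the mutated position $k$. On the $d$-vector side, formula \eqref{d-vector} gives
\[
\mfd_{k;t'} = -\mfd_{k;t} + \max\Bigl(\sum_i [b_{ik;t}]_+\mfd_{i;t},\ \sum_i [-b_{ik;t}]_+ \mfd_{i;t}\Bigr).
\]
On the $F$-polynomial side, I would extract a parallel recursion for $\mff_{k;t'}$ by applying the $F$-polynomial exchange relation
\[
F_{k;t}\, F_{k;t'} = y_k \prod_i F_{i;t}^{[c_{ik;t}]_+}\prod_j y_j^{[-c_{jk;t}]_+} + \prod_i F_{i;t}^{[-c_{ik;t}]_+}\prod_j y_j^{[c_{jk;t}]_+}
\]
and reading off the componentwise maximal degree of each $y_j$. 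The plan is then to combine this with the inductive hypothesis $\mff_{i;t} = [\mfd_{i;t}]_+$ to land exactly on $[\mfd_{k;t'}]_+$.

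The main obstacle is that the $F$-polynomial recursion involves a division by $F_{k;t}$, so na\"{\i}vely the top $y_j$-degrees of the two summands in the numerator could cancel, and the relation between the max-degree of a quotient of polynomials and the maxes of the factors is delicate. To bypass this, I would use two finite-type inputs. First, the positivity theorem for $F$-polynomials (all coefficients in $\mbN$) rules out cancellation when one extracts the degree of a fixed monomial. Second, in finite type the Fomin--Zelevinsky denominator conjecture holds: for every non-initial cluster variable, $\mfd_{i;t}$ is componentwise nonnegative, so $[\mfd_{i;t}]_+ = \mfd_{i;t}$. These two facts turn both the $d$-vector recursion and the $f$-vector recursion into essentially the same tropical identity involving only the matrix $B_t$, and the induction closes.

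A more conceptual alternative, specific to the $A_n$ (or any finite Dynkin) setting, is to use categorification: every non-initial cluster variable $x_{i;t}$ of type $A_n$ corresponds to an indecomposable rigid module $M_{i;t}$ over the associated cluster-tilted algebra, and both $\mfd_{i;t}$ and $\mff_{i;t}$ can be identified with the dimension vector $\underline{\dim}\, M_{i;t}$; since this dimension vector is nonnegative, $[\mfd_{i;t}]_+ = \mfd_{i;t} = \mff_{i;t}$. This route avoids the recursion bookkeeping entirely but trades it for the categorical setup. Either way, combined with the trivial initial case, this yields the theorem.
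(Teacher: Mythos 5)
First, note that the paper does not prove this statement at all: it is imported verbatim as \cite[Theorem 1.8]{Gyo21}, so there is no internal proof to compare against, and your attempt has to stand on its own.

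On its own terms, your main (inductive) route has a genuine gap at the heart of the inductive step. The recursion you can legitimately extract from the $F$-polynomial exchange relation (after correcting the exponents of the $F_{i;t}$, which are governed by $B_t$, not $C_t$) is
\begin{align*}
\mff_{k;t'} \;=\; -\mff_{k;t} \;+\; \max\Bigl([\mfc_{k;t}]_+ + \sum_i [b_{ik;t}]_+\,\mff_{i;t},\ \ [-\mfc_{k;t}]_+ + \sum_i [-b_{ik;t}]_+\,\mff_{i;t}\Bigr),
\end{align*}
where positivity of $F$-polynomial coefficients indeed lets you take componentwise maxima. But the $d$-vector recursion \eqref{d-vector} has no $[\pm\mfc_{k;t}]_+$ terms, and moreover the inductive hypothesis feeds in the truncated vectors $[\mfd_{i;t}]_+$ rather than $\mfd_{i;t}$ (these differ exactly when some $x_{i;t}$ is initial, $\mfd_{i;t}=-\mfe_i$). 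Your claim that positivity of $F$-polynomials plus nonnegativity of non-initial $d$-vectors ``turn both recursions into essentially the same tropical identity'' is precisely the nontrivial content of the theorem and is not justified: one must show that the $c$-vector contributions and the truncation discrepancies cancel against each other, which requires finite-type-specific input (sign-coherence, control of which clusters contain initial variables, etc.). As written, the induction does not close. Your categorification alternative is closer to a real argument, but as sketched it only covers acyclic initial seeds of simply-laced type: in type $A_n$ with $n\geq 3$ a seed can contain $3$-cycles, so one needs the cluster-tilted (non-hereditary) comparison of $d$-vectors with dimension vectors, and the theorem as stated also covers non-simply-laced finite types, which your module-theoretic identification does not address. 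For the purposes of this paper, the statement should simply remain a citation.
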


\subsection{Geometric realization of $d$-vectors of type $A_n$}\

In this subsection, we use the geometric realization of $d$-vectors of type $A_n$ and prove the log-concavity of $F$-polynomials of type $A_n$. Following the geometric realization of cluster algebras of type $A_n$ in \Cref{3.1}, we let $T$ be the initial triangulation of $(n+3)$-gon, which corresponds to the initial cluster $\mfx=\mfx_{T}$.
\begin{proposition}{\cite[Theorem 8.6]{FST08}}
\label{geo d-vector}
	Let $\mcA$ be a cluster algebra of type $A_n$ and $x_{\gamma}$ be the cluster variable corresponding to the diagonal $\gamma$, where $\gamma \notin T$. Then its corresponding $d$-vector is 
	\begin{align}
		\mfd_\gamma= \begin{pmatrix}d_{1}\\ \vdots \\ d_{n} \end{pmatrix},
	\end{align}
	where $d_{j}$ is the number of intersections between $\gamma$ and $T_j$.
\end{proposition}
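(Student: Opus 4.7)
The plan is to derive the claim directly from Schiffler's $T$-path expansion (\Cref{expression}). Write $\gamma = X_{a,b}$ for two non-adjacent vertices $a,b$ of $\mcP_{n+3}$, so that
\begin{align*}
x_\gamma = \sum_{\mcP \in \mcP_T(a,b)} x(\mcP),
\end{align*}
and $\mfd_\gamma$ is the vector of exponents appearing in the reduced denominator of the right-hand side. Since any two diagonals of a convex polygon cross at most once, each intersection number $d_j = |\gamma \cap T_j|$ lies in $\{0,1\}$, matching the fact that every monomial $x(\mcP)$ contains each $x_j$ to a power in $\{-1,0,1\}$ by axiom (T3).

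First I would prove a lower bound on exponents: in every $T$-path $\mcP$ and every $j$, the exponent of $x_j$ in $x(\mcP)$ is at least $-d_j$. When $d_j = 0$, axiom (T5) forbids $T_j$ from occurring at an even step of $\mcP$, so $T_j$ contributes $+1$ (odd step) or $0$ (absent); when $d_j = 1$, the exponent is already in $\{-1,0,1\}$. This shows that $x_\gamma \cdot \prod_j x_j^{d_j}$ is a genuine polynomial in $x_1,\dots,x_n$.

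Next I would construct a \emph{saturating} $T$-path $\mcP^{\ast}$ attaining equality for every $j$ with $d_j = 1$ simultaneously. Let $\Delta_0,\dots,\Delta_m$ be the triangles of $T$ traversed by $\gamma$ in order from $a$ to $b$, so that $\Delta_{k-1}$ and $\Delta_k$ share the diagonal $T_{j_k}$ crossed by $\gamma$. Define $\mcP^{\ast}$ as the zigzag that places $T_{j_k}$ at the even step $2k$ and, at the odd steps, travels along the edge of $\Delta_k$ incident to the apex vertex $p_k \notin T_{j_k}\cup T_{j_{k+1}}$, alternating the sides of $\gamma$. The axioms (T1), (T2), (T4)--(T6) are immediate from the construction; axiom (T3) follows because consecutive triangles share only the diagonal $T_{j_k}$, and because no $T_{j_k}$ passes through $a$ or $b$, so the initial and final side edges cannot coincide with any even-step diagonal. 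By construction the exponent of $x_j$ in $x(\mcP^{\ast})$ is exactly $-d_j$ for every $j$.

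Combining these facts: by \Cref{01}, $x_\gamma \cdot \prod_j x_j^{d_j}$ is a polynomial with non-negative integer coefficients containing the monomial $x(\mcP^{\ast})\prod_j x_j^{d_j}$, whose $x_j$-exponent equals $0$ for every $j$ with $d_j=1$. Since no cancellation is possible among non-negative terms, this polynomial is not divisible by any $x_j$, so $\prod_j x_j^{d_j}$ is precisely the reduced denominator of $x_\gamma$, giving $\mfd_\gamma = (d_1,\dots,d_n)^{\top}$. The main technical hurdle is the combinatorial construction of $\mcP^{\ast}$ — specifically, verifying axiom (T3) throughout — which requires a careful case analysis at the two boundary triangles $\Delta_0$ and $\Delta_m$ to ensure that the chosen endpoint side-edges do not clash with any diagonal used elsewhere in the zigzag.
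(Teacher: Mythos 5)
The paper does not actually prove this statement---it is imported verbatim from \cite[Theorem 8.6]{FST08}---so your $T$-path argument is an independent route, and its skeleton is reasonable: the lower bound on exponents via (T3) and (T5) is correct, and a saturating zigzag $\mcP^{\ast}$ does exist. (Your description of it is off, though: every vertex of $\Delta_k$ lies on $T_{j_k}$ or $T_{j_{k+1}}$, so there is no vertex $p_k\notin T_{j_k}\cup T_{j_{k+1}}$; what you want at the intermediate odd steps is the edge $E_k$ of $\Delta_k$ opposite the vertex shared by $T_{j_k}$ and $T_{j_{k+1}}$, and the routing really does require the fan-versus-zigzag case analysis you defer.)

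The genuine gap is in the final step. The monomial $x(\mcP^{\ast})\prod_j x_j^{d_j}$ certifies that $N=x_\gamma\prod_j x_j^{d_j}$ is not divisible by $x_j$ only for those $j$ with $d_j=1$. For $j$ with $d_j=0$ you must additionally rule out that \emph{every} monomial of $x_\gamma$ has strictly positive $x_j$-exponent, and $\mcP^{\ast}$ cannot do this, because its odd steps may themselves traverse uncrossed diagonals of $T$. Concretely, take the hexagon with diagonals $13,35,15$ and $\gamma=24$: the saturating path is $(2,3,1,5,3,4\mid 23,13,15,35,34)$, whose monomial $x_{15}/(x_{13}x_{35})$ \emph{is} divisible by $x_{15}$, a diagonal with intersection number $0$. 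Here $x_{24}=(x_{13}+x_{35}+x_{15})/(x_{13}x_{35})$, so the proposition holds, but the certificate of non-divisibility by $x_{15}$ is a different $T$-path, namely $(2,1,3,4\mid 21,13,34)$. To close the gap you need, for every diagonal $T_j$ with $d_j=0$ that occurs as an edge of one of the traversed triangles $\Delta_0,\dots,\Delta_m$, a $T$-path from $a$ to $b$ avoiding $T_j$; diagonals not bounding any $\Delta_k$ are harmless, since conditions (T5) and the non-crossing of $T$ force every $T$-path from $a$ to $b$ to use only edges of the $\Delta_k$'s. This missing construction is comparable in difficulty to the one you already deferred for $\mcP^{\ast}$, so as written the proof is incomplete at its decisive point.
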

Based on the preparations above, we are ready to prove the theorem as follows.
\begin{theorem}\label{log-concavity of F poly}
	All the $F$-polynomials of type $A_n$ are log-concave.
\end{theorem}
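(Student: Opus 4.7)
The plan is to reduce the statement to a triviality by showing that every $F$-polynomial of type $A_n$ is multilinear; once this is established, the per-coordinate log-concavity inequality of \Cref{log-concave2} holds vacuously in every coordinate direction.

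Concretely, I would argue as follows. If $x_{i;t}$ lies in the initial seed then $F_{i;t}(\mathbf{y})=1$, which is trivially log-concave, so assume $x_{i;t}$ is non-initial. Under the geometric realization recalled in \Cref{3.1}, $x_{i;t}$ then corresponds to a diagonal $\gamma\notin T$ of the triangulated polygon $\mcP_{n+3}$. By \Cref{gyo21} we have $\mff_{i;t}=[\mfd_{i;t}]_{+}$, and by \Cref{geo d-vector} the $j$-th entry $d_{j}$ of $\mfd_{i;t}$ equals the number of interior intersection points of $\gamma$ with the initial diagonal $T_{j}$. Two distinct diagonals of a convex polygon are non-collinear straight segments and therefore meet in at most one interior point, so $d_{j}\in\{0,1\}$ for every $j$, and in turn each entry $f_{j}=[d_{j}]_{+}$ of $\mff_{i;t}$ lies in $\{0,1\}$ as well.

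Since $f_{j}$ bounds the maximal degree of $y_{j}$ in $F_{i;t}(\mathbf{y})$, the $F$-polynomial is multilinear with non-negative integer coefficients, and we may write
\[
F_{i;t}(\mathbf{y})=\sum_{S\subseteq\{1,\ldots,n\}}c_{S}\prod_{j\in S}y_{j},\qquad c_{S}\in\mbZ_{\geq 0}.
\]
For the log-concavity inequality of \Cref{log-concave2},
\[
c_{a_1,\ldots,a_j,\ldots,a_n}^{\,2}\;\geq\;c_{a_1,\ldots,a_j-1,\ldots,a_n}\,c_{a_1,\ldots,a_j+1,\ldots,a_n},
\]
the admissible range of $a_{j}$ is contained in $\{0,1\}$, so the boundary convention $c_{\ldots,-1,\ldots}=c_{\ldots,2,\ldots}=0$ forces one of the two factors on the right to vanish regardless of whether $a_{j}=0$ or $a_{j}=1$. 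Every instance of the inequality is therefore automatically satisfied, and $F_{i;t}(\mathbf{y})$ is log-concave.

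The main ``obstacle'' is conceptual rather than technical: one has to recognize that the $T$-path combinatorics developed for \Cref{main1} plays no role here, and that the entire substance of the theorem is already packaged in the identity $\mff=[\mfd]_{+}$ of \Cref{gyo21} together with the elementary fact that two polygon diagonals cross at most once. The only genuine verification is that these two ingredients really do force multilinearity of $F_{i;t}$, after which the log-concavity inequality becomes vacuous coordinate by coordinate.
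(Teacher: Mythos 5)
Your argument is correct and is essentially identical to the paper's proof: both split into the initial and non-initial cases, and in the latter use \Cref{geo d-vector} (two diagonals of a convex polygon cross at most once, so $0\leq d_{ji;t}\leq 1$) together with \Cref{gyo21} to conclude $0\leq f_{ji;t}\leq 1$, whence the $F$-polynomial is multilinear and the log-concavity inequalities hold vacuously. Your write-up merely makes explicit the final vacuity step that the paper leaves implicit.
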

\begin{proof}\
There are two possible cases of $F$-polynomials to be discussed.
	\begin{enumerate}[leftmargin=2em]
		\item If the cluster variable $x_{i;t}$ belongs to the initial cluster $\mfx$, then $F_{i;t}(y_1,\dots,y_n)=1$ and it is log-concave immediately.
		\item If the cluster variable $x_{i;t}$ does not belong to the initial cluster $\mfx$, then by the geometric realization and \Cref{geo d-vector}, we have \begin{align}0\leq d_{ji;t}\leq 1\end{align} for any $j\in \{1,\dots,n\}$. Hence, according to \Cref{gyo21}, we have \begin{align}0\leq f_{ji;t}\leq 1\end{align} for any $j\in \{1,\dots,n\}$. As a consequence, by the definition of $f$-vectors, we conclude that $F_{i;t}(y_1,\dots,y_n)$ is log-concave.
	\end{enumerate}
\end{proof}
\begin{example} The $C$-matrices, $D$-matrices (consisting of $d$-vectors), $G$-matrices and $F$-polynomials of type $A_2$ are as follows, see \Cref{A2type}. It is direct that the $F$-polynomials of type $A_2$ are log-concave.
\begin{table}[ht]

\centering
\scalebox{0.67}{
\begin{tabular}{|c|c|c|c|c|c|}
\hline
&&&&&\\[-3mm]
Type $A_2$& $t_0$ & $t_1$ & $t_2$  & $t_3$  & $t_4$ \\ [1mm]
\hline
&&&&&\\[-2mm]
$B$-matrix& $\begin{pmatrix}
 0 & 1 \\
 -1 & 0
\end{pmatrix}$ & $\begin{pmatrix}
 0 & -1 \\
 1 & 0
\end{pmatrix}$
& $\begin{pmatrix}
 0 & 1 \\
 -1 & 0
\end{pmatrix}$
& $\begin{pmatrix}
 0 & -1 \\
 1 & 0
\end{pmatrix}$ & $\begin{pmatrix}
 0 & 1 \\
 -1 & 0
\end{pmatrix}$
 \\[4mm] 
\hline
&&&&&\\ [-2mm]
Cluster& $\big(x_1,x_2\big)$ & $\big(\dfrac{x_2+y_1}{x_1},x_2\big)$
& $\big(\dfrac{x_2+y_1}{x_1},\dfrac{y_1y_2x_1+x_2+y_1}{x_1x_2}\big)$
& $\big(\dfrac{y_2x_1+1}{x_2},\dfrac{y_1y_2x_1+x_2+y_1}{x_1x_2}\big)$ & $\big(\dfrac{y_2x_1+1}{x_2},x_1\big)$ \\[4mm]
\hline
&&&&&\\[-2mm]
$C$-matrix& $\begin{pmatrix}
 1 & 0 \\
 0 & 1
\end{pmatrix}$ & $\begin{pmatrix}
 -1 & 1 \\
 0 & 1
\end{pmatrix}$
& $\begin{pmatrix}
 0 & -1 \\
 1 & -1
\end{pmatrix}$
& $\begin{pmatrix}
 0 & -1 \\
 -1 & 0
\end{pmatrix}$ & $\begin{pmatrix}
 0 & 1 \\
 -1 & 0
\end{pmatrix}$ \\[4mm]
\hline
&&&&&\\[-2mm]
$D$-matrix& $\begin{pmatrix}
 -1 & 0 \\
 0 & -1
\end{pmatrix}$ & $\begin{pmatrix}
 1 & 0 \\
 0 & -1
\end{pmatrix}$
& $\begin{pmatrix}
 1 & 1 \\
 0 & 1
\end{pmatrix}$
& $\begin{pmatrix}
 0 & 1 \\
 1 & 1
\end{pmatrix}$ & $\begin{pmatrix}
 0 & -1 \\
 1 & 0
\end{pmatrix}$ \\[4mm]
\hline
&&&&&\\[-2mm]
$G$-matrix& $\begin{pmatrix}
 1 & 0 \\
 0& 1
\end{pmatrix}$ & $\begin{pmatrix}
 -1 & 0 \\
 1& 1
\end{pmatrix}$
& $\begin{pmatrix}
 1 & -1 \\
 1 & 0
\end{pmatrix}$
& $\begin{pmatrix}
 0 & -1 \\
 -1 & 0
\end{pmatrix}$ & $\begin{pmatrix}
 0 & 1 \\
 -1 & 0
\end{pmatrix}$\\[4mm]
\hline
&&&&&\\ [-1mm]
$F$-polynomial& $1$, $1$& $y_1+1$, $1$
& $y_1+1$,  $y_1y_2+y_1+1$
& $y_2+1$, $y_1y_2+y_1+1$ & $y_2+1$, $1$\\[3.5mm]
\hline
&&&&&\\[-2mm]
$F$-matrix& $\begin{pmatrix}
 0 & 0 \\
 0 & 0
\end{pmatrix}$ & $\begin{pmatrix}
 1 & 0 \\
 0 & 0
\end{pmatrix}$
& $\begin{pmatrix}
 1 & 1 \\
 0 & 1
\end{pmatrix}$ & $\begin{pmatrix}
 0 & 1 \\
 1 & 1
\end{pmatrix}$ & $\begin{pmatrix}
 0 & 0 \\
 1 & 0
\end{pmatrix}$\\[4mm]
\hline
\end{tabular}}
\hspace{3cm}
\caption{Cluster information of type $A_2$ with principal coefficients.} \label{A2type}
\end{table}
	
\end{example}
\newpage

\end{document}